    \newcommand{\href}[2]{#2}
\newtheorem{theorem}{Theorem}[section]
\newtheorem{corollary}[theorem]{Corollary}
\newtheorem{lemma}[theorem]{Lemma}
\newtheorem{assumption}[theorem]{Assumption}
\newtheorem{example}[theorem]{Example}
\newtheorem{remark}[theorem]{Remark}
\newtheorem{algorithm}[theorem]{Algorithm}
\newtheorem{criteria}[theorem]{Criteria}
\newtheorem{condition}[theorem]{Condition}
\numberwithin{equation}{section}  
  \newcounter{mnote}
  \let\oldmarginpar\marginpar
    \renewcommand\marginpar[1]{\-\oldmarginpar[\raggedleft\footnotesize #1]%
    {\raggedright\footnotesize #1}}
\newenvironment{enumerateY}
{\begin{list}{{\it(\roman{enumii})}}{
\usecounter{enumii}
\leftmargin 2.5em\topsep 0.5em\itemsep -0.0em\labelwidth 50.0em}}
{\end{list}}
\newenvironment{enumerateX}
{\begin{list}{\arabic{enumi})}{
\usecounter{enumi}
\leftmargin 2.5em\topsep 0.5em\itemsep -0.0em\labelwidth 50.0em}}
{\end{list}}
\definecolor{myblue}{rgb}{0.2,0.2,0.7}
\definecolor{mygreen}{rgb}{0,0.6,0}
\definecolor{mycyan}{rgb}{0,0.6,0.6}
\definecolor{myred}{rgb}{0.9,0.2,0.2}
\definecolor{mymagenta}{rgb}{0.9,0.2,0.9}
\definecolor{mywhite}{rgb}{1.0,1.0,1.0}
\definecolor{myblack}{rgb}{0.0,0.0,0.0}
\renewcommand{\div}{{\operatorname{div}}}
\newcommand{\eps}{\varepsilon}
\newcommand{\GM}{\gamma_{\text{MAX}}}
\newcommand{\gmo}{\gamma_{\text{MONO}}}
\newcommand{\R}{{\mathbb R}}       
\newcommand{\cA}{{\mathcal A}}
\newcommand{\cE}{{\mathcal E}}
\newcommand{\cF}{{\mathcal F}}
\newcommand{\cL}{{\mathcal L}}
\newcommand{\cP}{{\mathcal P}}
\newcommand{\cT}{{\mathcal T}}
\newcommand{\cU}{{\mathcal U}}
\newcommand{\cV}{{\mathcal V}}
\newcommand{\w}{{\tt w}}  
\newcommand{\fl}{{\cF^e}} 
\newcommand{\lin}{{\cL^e}} 
\DeclareMathAlphabet{\mathpzc}{OT1}{pzc}{m}{it}
\newcommand{\f}{\frac}
\newcommand{\tol}{{\tt tol}}  
\newcommand{\itmax}{{\tt itmax}}  
\DeclareMathOperator\sign{sign}
\DeclareMathOperator\meas{meas}
\DeclareMathOperator\ceil{ceil}
\newcommand{\on}{\text{ on }}
\newcommand{\oor}{\text{ or }}
\newcommand{\an}{\text{ and }}
\newcommand{\inn}{\text{ in }}
\newcommand{\tforall}{\text{ for all }}
\newcommand{\bigo}{{\mathcal O}} 
\newcommand{\rest}{\big|}
\newcommand{\argmin}{\text{argmin\,}}
\newcommand{\argmax}{\text{argmax\,}}
\newcommand{\essup}{\text{ess sup\,}}
\newcommand{\grad}{\nabla} 
\newcommand{\goto}{\rightarrow}
\newcommand{\norm}[1]{\ensuremath{\lVert{#1} \rVert}}
\newcommand{\nr}[1]{\norm{#1}} 
\newcommand{\pa}{\partial}
\definecolor{blue}{rgb}{0.2,0.2,0.7}
\definecolor{red}{rgb}{0.7,0.3,0.1}
\definecolor{cyan}{rgb}{0.2,0.5,0.6}
\begin{document}

\title[Pseudo-time regularization methods]
      {Pseudo-time regularization for PDE with solution-dependent diffusion}

\author[S. Pollock]{Sara Pollock}
\email{sara.pollock@wright.edu}

\address{Department of Mathematics and Statistics\\
         Wright State University\\ 
         Dayton, OH 45435}


\date{\today}

\keywords{
Adaptive methods, 
nonlinear diffusion,
quasilinear equations, 
pseudo-time,
Newton-like methods,
inexact methods,
regularization.
}


\begin{abstract}
This work unifies pseudo-time and inexact regularization 
techniques for nonmonotone classes of partial differential equations,
into a regularized pseudo-time framework.
Convergence of the residual at the predicted rate is investigated through 
the idea of controlling the linearization error, and regularization parameters 
are defined following this analysis, then assembled in an adaptive algorithm. 
The main innovations of this paper include the introduction of
a Picard-like regularization term scaled by its cancellation effect
on the linearization error to stabilize the Newton-like iteration;
an updated analysis of the regularization parameters in terms of 
minimizing an appropriate quantity; and, strategies to accelerate
the algorithm into the asymptotic regime. Numerical experiments demonstrate 
the method on an anisotropic diffusion problem where the Jacobian
is not continuously differentiable, and a model problem with steep gradients
and a thin diffusion layer.
\end{abstract}

\maketitle




\section{Introduction}
This paper is concerned with the finite element approximation to 
second order quasilinear elliptic equations in divergence form, 
\begin{align}\label{eqn:PDE}
-\div(\kappa(u) \grad u) &= f, \inn \Omega \\
u & = 0, \on \pa \Omega, \label{eqn:paOmega}
\end{align}
for polygonal doman $\Omega \subset \R^2$. 
Nonlinear diffusion problems are ubiquitous throughout science and
engineering applications, appearing in applications such as heat conduction,
groundwater flow, diffusion of contaminants and flow in porous media
\cite{CaRa94,Crank75,FKCKM03,HlKrMa94}.
Here, the nonlinear diffusion coefficient $\kappa(u)$ may be thought of as a 
scalar quantity, or in the more general anisotropic case as a matrix coefficient
with entries $\kappa_{ij}(u), ~j = 1,2$,
where \eqref{eqn:PDE} has the expansion
\begin{align}\label{eqn:anisotropic}
-\sum_{i,j = 1}^n \f{\pa}{\pa x_j} \left( \kappa_{ij}(u) \f{\pa }{\pa x_j}u\right) 
           = f, \inn \Omega,
\end{align}
with the ellipticity condition: For some $\eta > 0$ 
\begin{align}\label{eqn:elliptic}
\sum_{i,j=1}^2 \kappa_{ij}(s)\xi_i \xi_j \ge \eta\sum_{i = 1}^2 \xi_i^2,
\end{align}
for any $s \in \R$,  and all $\xi = (\xi_1,\xi_2) \in \R^2$.

As remarked in~\cite{HlKrMa94}, while \eqref{eqn:PDE} for scalar-valued 
$\kappa(u)$ may be solved by the Kirchhoff transform (see, {\em e.g.,} 
\cite{Crank75}),
this technique does not carry over to the anisotropic case, or to lower order
solution-dependent terms.  
The nonlinear diffusion problem \eqref{eqn:PDE} is generally in the class of 
nonmonotone problems; that is 
$\int_{\Omega}\left\{ \left( \kappa(v) \grad v-\kappa(w) \grad w\right) \cdot \grad(v - w)
\right\} > 0$,
is not guaranteed to hold for each $u, v$ is the solution space, {\em e.g.,} 
$u,v \in H_0^1(\Omega)$.
While convergence and optimality of finite element methods for monotone classes of
quasilinear problems have been recently investigated in \cite{BDK12,GaMoZu11}, and the 
references therein, nonmonotone classes of problems are less understood.
In particular, convergence results rely on sufficiently fine global mesh
conditions, and sufficiently close initial guesses to assume convergence of 
Newton-iterations to solve the discrete nonlinear problem
\cite{BiGi13,DoDu75,GuPa07,HTZ09a}. 
Other recent work~\cite{ErnVor13} includes this problem class in an adaptive framework
of incomplete linear and nonlinear solves, but with the implicit assumption that
the sequence of solution iterates is convergent to the solution. 
As described in previous work by the author in \cite{Pollock14a,Pollock15a,Pollock15b},
the Newton iterations cannot realistically be assumed to converge, and are often
observed to diverge,
especially for problems of the form \eqref{eqn:PDE} which may contain steep gradients
and thin internal layers in the solution-dependent diffusion coefficient $\kappa(u)$.
The interest in the 
current investigation is to develop a regularized adaptive method and understand 
the residual convergence of the discrete nonlinear problem without these assumptions.
It is of particular interest to allow a solution process to start on a coarse mesh which 
is refined adaptively, to uncover an efficient and accurate discretization.  
Future work will directly address convergence of the discrete solution to the 
weak solution of \eqref{eqn:PDE}.

The weak form of \eqref{eqn:PDE}-\eqref{eqn:paOmega} is given by:
Find $u \in \cU$ such that
\begin{align}\label{eqn:weak}
B(u;u,v) = \int_\Omega fv, ~\tforall v \in \cV,
\end{align}
for solution space $\cU$ and test space $\cV$, with
\begin{align}\label{eqn:defB}
B(u; u, v) = \int_\Omega \kappa(u) \grad u \cdot \grad v.
\end{align}
Based on the analysis of \cite{DoDuSe71,HlKrMa94,Zhang2000}, the following set of 
conditions in addition to the uniform ellipticity~\eqref{eqn:elliptic} is
sufficient to assure existence and uniqueness
of the weak solution $u \in H_0^1(\Omega)$ of ~\eqref{eqn:weak}.
\begin{assumption}\label{assume:pdeWP} Assume the data satisfy the following 
boundedness and Lipschitz conditions.
\begin{enumerate}
\item Boundedness of the diffusion coefficient
\begin{align}\label{eqn:kappaBounded} 
\essup_{s \in \R }\kappa_{ij}(s) \le C_\kappa, ~i,j = 1,2.
\end{align}
\item Boundedness of the source: $f \in L_2(\Omega)$ satisfies 
\begin{align}\label{eqn:fBounded} 
\essup_{x \in \Omega }f(x) \le C_f.
\end{align} 
\item Lipschitz continuity of the diffusion coefficient
\begin{align}\label{eqn:kappaLip}
|\kappa_{ij}(s) - \kappa_{ij}(t)| \le \omega_\kappa |s-t|, ~j = 1,2, 
~\tforall s, t \in \R.
\end{align}
\end{enumerate}
For scalar-valued $\kappa$, the above should be interpreted with 
$\kappa_{11} = \kappa_{22} = \kappa$, and $\kappa_{12} = \kappa_{21}= 0$.
\end{assumption}

A finite dimensional, or discrete problem corresponding to ~\eqref{eqn:defB}, 
is given by: Find $u \in \cU_k$ such that 
\begin{align}\label{eqn:abs_discrete}
B(u;u,v) = \int_\Omega fv, ~\tforall v \in \cV_k \subset \cV,
\end{align}
where $\cU_k \subset \cU$ and $\cV_k \subset \cV$ are finite dimensional subspaces
of the solution and test spaces.  For the remainder of this paper, $\cU$ and
$\cV$ are assumed subsets of $H_0^1(\Omega)$. The trial and test spaces 
$\cU_k = \cV_k$, now referred to as $\cV_k$, are taken as the 
$C^0$ continuous $\cP^p$ finite element
spaces of polynomials of degree $p$ over each mesh element,
corresponding to nested mesh partitions, $\cT_k$, which are conforming in the 
sense of ~\cite{Ciarlet}.

In the current discussion, a Newton-like method is applied so solve the discrete
nonlinear equations induced by \eqref{eqn:abs_discrete}.  The advantages of 
a Newton-like approach include fast convergence in the asymptotic regime, and the
mesh-independence principle, as in for instance ~\cite{DePo92}.   
To understand the convergence of sequence of linear equations used to approximate
the solution of the discrete nonlinear problem, we require some control of the
Jacobian.  For this reason, the following conditions on the problem data are 
considered, in addition to those for well-posedness of the PDE, 
namely, the ellipticity condition \eqref{eqn:elliptic} and 
Assumption \ref{assume:pdeWP}.
\begin{assumption}[Assumptions on the problem data]\label{assume:pdeN}
The following assumptions are made on the diffusion coefficient 
$\kappa'(u)$, componentwise, $\kappa_{ij}'(u), ~ j = 1,2$.
\begin{enumerate} 
\item $\kappa'$ is bounded. In particular
\begin{align}\label{eqn:Kprime_bounded}
\nr{\kappa'(s)\xi} \le C_B \nr{\xi} ~\tforall s \in \R ~\an~ \xi \in \R^2.
\end{align}   
\item $\kappa'$ satisfies the Lipschitz condition
\begin{align}\label{eqn:LipKprime}
|\kappa_{ij}'(s) - \kappa_{ij}'(t)| \le \omega_B|s-t|, ~j=1,2, 
~\tforall s,t \in \R.
\end{align} 
\end{enumerate}
\end{assumption}

\begin{remark}[Standard problem classes]\label{remark:known_classes} 
The two existing convergence results for adaptive methods for nonmonotone problems
of the form \eqref{eqn:PDE} are developed in ~\cite{HTZ09a} using continuous
Galerkin, in particular linear elements; and, \cite{BiGi13} using discontinuous
Galerkin methods.

The results of \cite{HTZ09a} are adapted from the analysis of uniform methods
in \cite{CaRa94}.  Their approach assumes $W^{1,p}$, $p > 2$ regularity of the 
solution $u$, and bounded $\kappa(s), \kappa'(s)$ and $\kappa''(s)$ for all $s \in \R$.
The results of ~\cite{BiGi13} are adapted from the analysis of uniform
methods in \cite{GuPa07}, based on the results of \cite{DoDu75}.  
For these results, it is assumed that the solution 
$u \in H^2(\Omega) \cap W^{1,\infty }(\Omega)$, and $\kappa(s)$ is twice 
continuously differentiable with bounded $\kappa(s), \kappa'(s)$ and $\kappa''(s)$ 
for all $s \in \R$.

Both frameworks explicitly assume a uniformly small initial meshsize 
and either implicitly or explicitly  assume the convergence of the iterative method 
used to solve the discrete nonlinear system on each refinement. 
\end{remark}

The following notation is used throughout the paper. Where not 
otherwise specified, the norm $\nr{\cdot}$ denotes the $L_2$ norm. The integral
$\int_{\Omega} u v$ is sometimes denoted $( u\, ,  v)$; and,
$\langle z, w \rangle$ denotes a Euclidean product between vectors $z$ and $w$.

The remainder of the paper is structured as follows. Section \ref{sec:reg} 
describes a framework for pseudo-time regularization, exploiting the 
quasi-linear structure of \eqref{eqn:weak}, and specifies the regularized system
under inexact assembly.  Section \ref{sec:residual_rep} derives
the expansion of the latest residual in terms of the previous residual, exposing
the regularization and linearization error terms.  Section \ref{sec:param_updates} 
suggests a set of regularization parameter updates 
based on the representation of Section~\ref{sec:residual_rep}, together with
an adaptive algorithm.
Finally, Section \ref{sec:numerics} demonstrates the ideas with some numerical
experiments. 

\section{Regularized formulation}\label{sec:reg}
The regularization framework is next described.  First, an error decomposition
is discussed to separate out the contributions to the error in each iterate; 
that is, each solution
to a linearized problem used to approximate the solution to \eqref{eqn:weak}.
The contributions to the error include those induced from added regularization, 
linearization, early termination of the iterations, inexact assembly, 
and discretization.

Then, in Section~\ref{subsec:reg_abstract}, 
a pseudo-time regularized iteration is described and fit into this framework. 
First, the pseudo-time regularization is introduced in a general sense, then the 
regularized iteration is derived from a linearization of the abstract formulation 
of the discrete problem~\eqref{eqn:abs_discrete}.  Section~\ref{subsec:assembly}, 
then introduces notation for the inexact assembly of the discrete problem, leading to
the regularized iteration in matrix form.  The steps are separated out in this
presentation to emphasize the relation between the practical inexactly assembled
equation that is actually solved computationally, and the abstract formulation
of the problem that is thought of on the PDE level.  
\subsection{Error decomposition}\label{subsec:error_decomp}
The goal of the numerical method, finite element or otherwise, is to approximate
the PDE solution; in this case, the solution to~\eqref{eqn:weak}. 
The error may be understood by breaking it into components
describing the discretization error, quadrature error, and so-called linearization or
nonlinear iteration error.  In the following notation, $u$ represents the (or a) 
solution to~\eqref{eqn:weak}; 
and $u_k^{\cE,\ast}$ denotes the solution to the discrete nonlinear problem on 
the $k^{th}$ refinement of the initial mesh partition, under exact assembly.  
Neither $u$ nor $u_k^{\cE, \ast}$ are generally computable quantities. 
The iterate $u_k^\ast$ is the solution up to some set tolerance of the discrete 
problem using a numerical and potentially inexact assembly procedure; $u_k^\ast$ may 
or may not be computable. 
In terms of the inexactly assembled problem, 
$u_k$ is the terminal iterate, potentially before 
convergence to tolerance, and $u_k^n$ is the $n^{th}$ 
iterate on the $k^{th}$ refinement.
\begin{align}\label{eqn:error_exp001}
u - u_k^n = (u - u_k^{\cE,\ast}) + (u_k^{\cE,\ast} - u_k^\ast) + (u_k^\ast - u_k) + 
            (u_k - u_k^n).
\end{align}
The frameworks developed in~\cite{BiGi13,HTZ09a} show convergence to zero of
the first term of \eqref{eqn:error_exp001}, for a restricted class of problems of
the type~\eqref{eqn:PDE} (see Remark~\ref{remark:known_classes}),
under the assumptions of a sufficiently small meshsize.  
These asymptotic results motivate the current work, by demonstrating
the approximation properties of a finite element solution to the PDE solution; 
however, the goal now is to develop a computational framework for which 
$u_k^{\cE,\ast} \goto u$ still holds, and by which $u_k^{\cE,\ast}$ can be approximated
by a computable sequence.

To that end, a regularized iteration is introduced starting on the initial,
presumably coarse, mesh.  The expansion of the error incorporating the 
introduced regularization breaks the last term of~\eqref{eqn:error_exp001} into
two new terms.
Let $u_k^S$ be the terminal iterate of the regularized problem on refinement $k$.  
For ease of notation, the indices on $S$ are suppressed, however it should be 
understood that $u_k^S$ is subject to regularization $S_k$; and likewise, on
iteration $n$ of refinement $k$, the iterate $u_k^{S,n}$ is subject
to regularization $S_k^n$.
\begin{align}\label{eqn:error_exp002}
u - u_k^{S,n} = (u - u_k^{\cE,\ast}) + (u_k^{\cE,\ast} - u_k^\ast) + (u_k^\ast - u_k) + 
            (u_k - u_k^S) + (u_k^S - u_k^{S,n}).
\end{align}
The regularization addressed in this paper can be broken into two parts: a
Jacobian regularization and a residual regularization term.  
Then the regularized Newton-like iteration described in the following sections
can be written in terms of a standard
Newton iteration for this problem with Jacobian $J = J(u)$ and residual $r = r(u)$, 
$J w = r, ~ u \leftarrow u + w$, by
\begin{align}\label{eqn:reg_structure}
(J + S^J)w = r + S^{res}, \quad u \leftarrow u + w. 
\end{align}
Denoting $S = S^J + S^{res}$, eliminating the regularization from the iteration
can be described as sending $S \goto 0$.

Control of the last term of \eqref{eqn:error_exp002}, 
$(u_k^S - u_k^{S,n})$ requires assumptions on 
the PDE, for instance Assumption~\ref{assume:pdeN}, as well as assumptions on the 
regularization; essentially, the sequence of regularized linear problems must be 
sufficiently stable.

Due to the regularization structure~\eqref{eqn:reg_structure}, 
control of the fourth term $(u_k - u_k^S)$ is established by $S \goto 0$ for 
all $k \ge K_0$, for some iteration $K_0$.  If the problem~\eqref{eqn:weak} 
is sufficiently well-posed, then the regularized iteration should 
limit and indeed revert to a standard Newton-iteration on each refinement after
some level $K_0$.  This type of result is detailed for a similar regularized 
Newton-like method in \cite{Pollock15b}. It is noted however that sending 
$S^{res} \goto 0$ restores consistency of the iteration. If a problem
features a potentially indefinite or badly conditioned Jacobian in the 
vicinity of a solution, it may be beneficial not to send $S^J$ to zero, rather
to keep some background level of Jacobian regularization that does not interfere 
with the convergence of the iteration.

The third term of \eqref{eqn:error_exp002} is the difference between 
the terminal iterate and one converged to tolerance.  This term is zero 
for $k$ larger than some $K_1$, where $K_1$ may be close and potentially equal to $K_0$,
under a suitable residual-reduction condition for terminating the nonlinear 
iterations on each refinement level $k$.
In the early stages of the solution process however, 
the nonlinear iteration may be stopped
far from convergence, and the difference $(u_k^\ast - u_k)$ may be non-negligible.

The second term of~\eqref{eqn:error_exp002} describes the error induced by 
inexact integration. 
With the potential of a highly oscillatory diffusion coefficient in \eqref{eqn:PDE},
this error term is not automatically assumed to be small as it is 
in~\cite{BiGi13,HTZ09a} and the references therein, 
where the assumption of a sufficiently small global meshsize can
control the level of accuracy.  Generally, information can 
be lost both by the averaging of integration against basis functions, and by 
the secondary averaging of approximate integration by quadrature.  
Finally, the first term $u - u_k^{\cE,\ast}$ constitutes the discretization error,
the difference between a solution to~\eqref{eqn:weak} and the solution of the 
exactly assembled discrete nonlinear problem on refinement $k$.  
The analysis of these first two terms, determining convergence of the error, 
is beyond the scope of this paper, which analyzes efficient convergence of the
residual.  The conditions under which $u_k^{\ast}$ converges to $u$ for $k$
greater than some $K_2$ will be discussed elsewhere.  

\begin{remark}[Inexact linear solves]\label{remark:inexact_linear_solves}
Solving the linear equations by an iterative method yields yet another term in 
the expansion.  For linear iteration $l$, the error between the PDE solution $u$
and iteration $l$, of regularized linear solve $n$, on mesh refinement $k$, 
may be decomposed into 
\begin{align}\label{eqn:error_exp003}
u - (u_k^n)^l &= (u - u_k^{\cE,\ast}) + (u_k^{\cE,\ast} - u_k^\ast) + (u_k^\ast - u_k) + 
            (u_k - u_k^S) + (u_k^S - u_k^{S,n})
\nonumber \\
         & + ( u_k^{S,n} - (u_k^{S,n})^l). 
\end{align}
The last error term is not addressed in the current paper, and the linear systems
are assumed solved exactly.  It is noted however that incomplete linear solves can
be exploited for both their regularization properties and efficiency, 
and this topic is worth investigation.
\end{remark}

\subsection{Regularized abstract formulation}\label{subsec:reg_abstract}
The regularized linear equations compatible with error decomposition
\eqref{eqn:error_exp002} are now derived through a pseudo-time discretization 
framework with respect to the abstract discrete problem \eqref{eqn:abs_discrete}.
The pseudo-time framework developed in for instance \cite{BaRo80a,CoKeKe02,KeKe98},
the book~\cite{Deuflhard11}, and the references therein, suggests to stabilize 
the solution of the elliptic equation $F(u) = 0$, introduce the pseudo-time 
dependence to $u$, and solve $\pa u/\pa t  + F(u) = 0$. To allow a preconditioned
or more general regularized framework, the discrete nonlinear pseudo-time regularized
problem is: Find $u \in \cV_k \subset \cV$ such that
\begin{align}\label{eqn:reg_001}
\phi(\dot u,v) + B(u; u,v) = \int_{\Omega} fv, \tforall v \in \cV_k,
\end{align}\label{eqn:reg_cts}
where $\dot u = \pa u/\pa t$.
Here, it is assumed that the bilinear form
$\phi(\cdot\, , \cdot \, )$ is continuous with respect to 
the native norm $\cV$, in this case taken to be the $H^1$ norm.
\begin{align}\label{reg_semidef}
\phi(w,v) \le C_\phi\nr{w}_1 \nr{v}_1.
\end{align}
The continuity assures $\phi(\dot u, \cdot)$ decays to zero as $\dot u \goto 0$
indicating a steady state solution. 
Rather that coercive, $\phi$ is assumed semi-definite
\begin{align}
\phi(w,w) \ge 0.
\end{align}
This allows for a more general regularization, as used in~\cite{Pollock14a,Pollock15a},
in which a cutoff function is used to only allow the regularization to act on 
select degrees of freedom.  Upon matrix assembly, the role of $\phi$ can be viewed
as improving the condition of the approximate Jacobian, and it should be chosen 
with this in mind. 
In the numerical experiments of Section \ref{sec:numerics},
two different choices of $\phi$ are illustrated.  The first sets
$\phi(w,v) = ((1+|\kappa'(z)|) \grad w, \grad v)$, for $z$ the initial solution iterate
on each refinement, thus adding more regularization locally to control steeper gradients
in the diffusion.  The second uses
$\phi(w,v) = (\grad w, \grad v)$, the standard Laplacian preconditioner, adding a 
uniform level of diffusion to stabilize the Jacobian.
The regularization functional is left in general form for the remainder of the analysis 
to emphasize that these are two of many choices.

A generalization of the Newmark time integration strategy ~\cite{Newmark59}, exploiting
the structure of the quasilinear equation~\eqref{eqn:PDE} is now introduced to
discretize \eqref{eqn:reg_001} in pseudo-time
\begin{align}\label{eqn:reg_002}
\phi({(\Delta t^n})^{-1} w^n, v) = 
   -\widetilde \gamma_{00} B(u^n; u^n, v) 
   -\widetilde \gamma_{10} B(u^{n+1}, u^n, v)
   -\widetilde \gamma_{01} B(u^n, u^{n+1}, v) 
   + (f,v),
\end{align} 
where $(f,v) = \int_{\Omega}f v,$ and $w^n = u^{n+1} - u^n$. 
Linearizing the second term on the right, and rewriting the third to isolate the
dependence on $w^n$ yields 
\begin{align}\label{eqn:reg_003}
\phi({(\Delta t^n})^{-1} w^n, v) &= 
   -\widetilde \gamma_{00} B(u^n; u^n, v) 
   -\widetilde \gamma_{10} \left( B(u^{n}, u^n, v) + B_1'(u^n; u^n,v)(w^n) \right) 
   \nonumber \\
   &\quad -\widetilde \gamma_{01} \left( B(u^n, u^{n}, v) + B(u^n, w^n, v) \right)
   + (f,v),
\end{align} 
where $B_1'(u; z, v)(w) \coloneqq \f {d}{ds} B(u + sw; z, v)\rest_{s = 0}$, 
the Gateaux derivative in the first argument of $B$, in the direction $w$.
Rearranging \eqref{eqn:reg_003} so that all terms involving the update step
$w^n$ appear on the left, and rescaling by 
$\widetilde \gamma \coloneqq \widetilde \gamma_{00} + \widetilde \gamma_{10} + 
\widetilde \gamma_{01},$ yields
\begin{align}\label{eqn:reg_004}
\alpha^n \phi(w^n, v) + \gamma_{10}B_1'(u^n;u^n,v)(w) + \gamma_{01} B(u^n; w^n,v)
= -B(u^n; u^n, v) + \delta (f,v), 
\end{align} 
with the four regularization coefficients given by
\begin{align}\label{eqn:reg_005}
\alpha^n     = 1 /( {\Delta t^n} \cdot {\widetilde \gamma}), \quad
\gamma_{10}  = {\widetilde \gamma_{10}}/ {\widetilde \gamma}, \quad
\gamma_{01}  = {\widetilde \gamma_{01}}/ {\widetilde \gamma}, \quad \an \quad
\delta       = 1/ {\widetilde \gamma}.
\end{align}
The coefficient $\alpha^n$ is then the rescaled reciprocal of the pseudo-time step, 
and $\alpha^n \goto 0$, corresponds to $\Delta t^n \goto \infty$.  It is
remarked that $\gamma_{10} = \gamma_{01}$ corresponds to the method discussed in
\cite{Pollock15a}, where this parameter is taken greater than one to introduce 
an increase in numerical dissipation, or controlled damping, into the iteration.
Further, $\gamma_{01} = 1 = \gamma_{10}$ corresponds to an implicit, or backward Euler
discretization of \eqref{eqn:reg_001}; while $\gamma_{01} = 0 =\gamma_{10}$, 
corresponds to an explicit, or forward Euler discretization of \eqref{eqn:reg_001}.  
And finally, $\gamma_{01} = 1$ and $\gamma_{10} = 0$ leads to a Picard iteration.
It is also recognized that $\delta = 1$ yields a consistent pseudo-time 
discretization.  

\subsection{Inexact assembly}\label{subsec:assembly}
Both the finite dimensional equation~\eqref{eqn:abs_discrete}, and the pseudo-time
regularized \eqref{eqn:reg_001}, are abstract
equations rather than computable systems.  
To clarify which quantities are assumed computationally available, 
the following notation is introduced to 
describe the discrete system induced inexact assembly, {\em e.g}
by quadrature, which may be assumed inexact for nonpolynomial integrands. 
Let $\cV$ be a discrete space, here a finite element space, with $n_{dof}$
degrees of freedom, spanned by the basis functions $\{\varphi^j\}_{j = 1}^{n_{dof}}$.
Supposing $u,v,w,z \in \cV$, each function has an exact expansion as a linear
combination of basis functions; in particular
$
w = \sum_{j = 1}^{n_{dof}} \tt w_j \varphi^j,
$
with $\tt w $ the vector of coefficients $\w_j, ~j = 1, \ldots, n_{dof}$.
Let $A(u;z)$ be an inexact assembly of 
$B(u;z,\varphi^j), ~j = 1, \ldots, n_{dof}$, with an error introduced by inexact 
integration, {\em e.g.,} quadrature error.
The source vector $f_{Q}$ is formed by the inexact integral of source function $f$
against each basis function $\varphi^j$.
The matrix assembly of the regularization $\phi(w,\varphi^j)$ is denoted $R$.
Let $\cA_Q$ represent the inexact
assembly operator.  The assembled systems under $\cA_Q$ are denoted
as follows.
\begin{align}
A(u;z)     & \coloneqq \cA_Q \left\{ (B(u;z,\varphi^j) 
             \right\}_{j = 1}^{n_{dof}}, \label{eqn:defA}\\
A_1'(u;z)\tt w & \coloneqq \cA_Q \left\{\f{d}{dt} B(u+tw;z,\varphi^j)
            \rest_{t=0} \right\}_{j = 1}^{n_{dof}}, 
\label{eqn:defA1p}\\
A_2'(u)\tt w   &\coloneqq \cA_Q \left\{\f{d}{dt} B(u;z+tw,\varphi^j)
           \rest_{t=0} \right\}_{j = 1}^{n_{dof}} = A(u,w), \label{eqref:defA2p}\\
R\w     & \coloneqq \cA_\cE  \left\{ \phi(w,\varphi^j) 
               \right\}_{j = 1}^{n_{dof}}, \label{eqn:defRi}\\
f_Q  & \coloneqq \cA_Q \left\{ \int f \varphi^j \right\}_{j = 1}^{n_{dof}}. 
        \label{eqn:defF}
\end{align}
The following commuting diagram  holds for the discrete assembly 
procedure given by \eqref{eqn:defA}-\eqref{eqn:defA1p}, with $u,v,w,z \in \cV$.
That is, the inexact assembly operator $\cA_Q$ commutes with the  Gateaux
derivative of the first argument of $B(\cdot \, ,  \cdot \, ,  \cdot \, )$.  
\begin{align}\label{eqn:commute_quad}
B(u;z,v) && \underrightarrow{\qquad\cA_Q\qquad}      && A(u;z) 
\nonumber \\
{\small \pa_u} \Bigg\downarrow &&            &&{\small \pa_u}\Bigg\downarrow 
\nonumber \\
B_1'(u;z,v)(w) && \underrightarrow{\qquad\cA_Q\qquad} && A_1'(u;z)\tt w,
\end{align}
This justifies the use of Taylor's theorem
in the error representation of the residual in Section~\ref{sec:residual_rep}.
In general, the Gateaux derivative commutes with 
projection-type discretizations; see for example~\cite{HoBaWa00}.  
This includes assembly under inexact integration, 
assuming the integral approximation  over each element $T$
falls in the general form  
$\int_T \phi \approx \sum_{i = 1}^{n_{Q_T}} \phi(x_i) \rho_i$, 
for $n_{Q_T}$ points $x_i$ in the interior of element $T$,  and weights $\rho_i$.

\subsection{Regularized matrix equations}\label{sec:reg_matrix}
Processing the linear pseudo-time regularized equation \eqref{eqn:reg_004} with the 
inexact assembly given by~\eqref{eqn:defA}-\eqref{eqn:defF}, yields the coefficients
of the update step $w$ as the solution to a linear system of equations.
\begin{align}\label{eqn:iteration_mainQ}
\left\{ \alpha^n R + \gamma_{10}A_1'(u^n;u^n) + \gamma_{01}A_2'(u^n)\right\} \w^n
  = \delta f_Q - A(u^n;u^n),
\end{align}
which may be written as
\begin{align}\label{eqn:iteration_Q}
M^n \w^n &= \f 1 {\gamma_{10}} r^n, ~\text{ with}~ \\
M^n     & = \f 1{\gamma_{10}}\alpha^n R + A_1'(u^n;u^n)  
               + (1 + \sigma_{01}^n)A_2'(u^n), \label{eqn:iteration_Mn} 
\quad \sigma_{01} = \f{\gamma_{01}}{\gamma_{10}} - 1, 
\\ 
r^n     & = \delta f_Q - A(u^n;u^n), \label{eqn:iteration_rn}
\end{align}
with the update $u^{n+1} = u^n + w^n$.

With respect to~\eqref{eqn:reg_structure}, 
the formal representation of the regularization structure, the Jacobian part $S^J$,
and the residual part $S^{res}$, of the regularization are given by
\begin{align}\label{eqn:reg_formal}
S^J & = \alpha R + (\gamma_{10}-1) A_1'(u^n; u^n) + (\gamma_{01}-1)A_2'(u^n), ~\an \\
S^{res} & = (\delta -1) f_Q.  
\end{align}
Consistency is restored by sending regularization parameter
$\delta \goto 1$.  Asymptotic efficiency is restored by sending $\alpha \goto 0$, 
$\gamma_{10} \goto 1$ and $\gamma_{01} \goto 1$, although this asymptotic 
efficiency may be at least partially sacrificed for stability, even into the
asymptotic regime, where the iterations converge to tolerance.  This balance
is understood in the next section where the residual representation exposes
the error contributions from regularization and linearization. So long as 
the regularization effectively controls the linearization error without increasing 
the norm of the residual, it is viewed as beneficial.
\section{Residual representation of the matrix equation}
\label{sec:residual_rep} 
The residual representation follows the standard method of applying Taylor's 
theorem to expand the $(n+1)^{th}$ residual about the $n^{th}$ residual, 
justified by the commuting diagram~\eqref{eqn:commute_quad}.  
This exposes the separate terms from the introduced regularization 
error and the intrinsic linearization error.  
The linearization error is bounded by a Lipschitz assumption 
on the problem data \eqref{eqn:LipKprime}, 
although approaches with more general assumptions such as a majorant condition
have also been developed for Newton iterations~\cite{Ferreira11}, and
would be interesting to investigate in the present context.
Unlike previous presentations by the author
\cite{Pollock14a,Pollock15a,Pollock15b}, here the structure of the
quasilinear problem is exploited to separate the linear and nonlinear
dependencies on the latest iterate $u^n$.
A choice of regularization parameters in then introduced in the context 
of minimizing an appropriate quantity to control the linearization error.

\subsection{Residual representation under inexact integration}
\label{subsec:resiRep_inexact}
Expanding the residual $r^{n+1}$ about $r^n$ yields
\begin{align}\label{eqn:error001}
r^{n+1}& = \delta f_Q - A(u^{n+1};u^{n+1}) 
\nonumber \\
       & = \delta f_Q - A(u^{n+1}; u^n) - A(u^{n+1}; w^n) 
\nonumber \\
       & = r^n - A_1'(u^n;u^n)w^n - A(u^n;w^n) 
           -K_1 - K_2,
\end{align}
with
\begin{align}\label{eqn:comperr_002}
K_1 & \coloneqq 
      \int_0^1\left\{ A_1'(u^n + t w^n;u^n) - A_1'(u^n;u^n) \right\}\w^n\, dt
\nonumber \\  
    & = A(u^{n+1};u^n) - A(u^n; u^n) - A_1'(u^n; u^n)\w^n, \\
K_2 & \coloneqq  \int_0^1 A_1'(u^n + tw^n; w^n)\w^n\, dt 
      = A(u^{n+1}; w^n) - A(u^n; w^n). 
\end{align}
Solving~\eqref{eqn:iteration_Q} for $A_1'(u^n; u^n)\w^n$ yields
\begin{align}\label{eqn:comperr_002a}
-A_1'(u^n;u^n)\w^n &=  \f{1}{\gamma_{10}}\alpha^n R \w^n  
                     +\f{\gamma_{01}}{\gamma_{10}}A(u^n;w^n) 
                     - \f{1}{\gamma_{10}}r^n + \fl,
\end{align}
where the floating-point arithmetic 
error $\fl$ is introduced from the solution of the linear system for 
coefficients $\w^n$. Applying \eqref{eqn:comperr_002a} to 
\eqref{eqn:error001} yields
\begin{align}\label{eqn:comperr_003}
r^{n+1} & = \left( 1 - \f 1{\gamma_{10}} \right)r^n 
         + \f{1}{\gamma_{10}}\alpha^n R\w^n
         + \sigma_{01}A(u^n;w^n)
         +\lin(u^n) + \fl. 
\end{align}
Here $K_1$ describes the dominant term in the linearization error, and 
$K_2$ the secondary term, whose linear component is $w^n$.  
The total linearization error is defined as
\begin{align}\label{eqn:comperr_004}
\lin(u^n) \coloneqq 
-(A(u^{n+1}; u^{n+1}) - A(u^n; u^{n+1}) - A_1'(u^{n};u^{n+1})\w^n) 
= -K_1 - K_2, 
\end{align}
which agrees with the definition of the one-step linearization error
given in~\cite{Pollock15b}.   
The convergence of the residual then follows from control over the 
linearization error, assuming the floating point error is 
sufficiently negligible.  A remark about this terms follows.
\begin{remark}[Floating-point error]\label{remark:floatingpoint}
The last
term in~\eqref{eqn:comperr_003}, $\fl$, denotes the floating point error, 
which cannot be controlled by the linearization, but neither can it be
entirely ignored.  
It can be estimated, for instance by the difference between two evaluations 
of the linearization error $\lin(u^n)$, one by \eqref{eqn:comperr_004}, and
the other by isolating $\lin(u^n) + \fl$ in \eqref{eqn:comperr_003}. 
In the preasymptotic and coarse mesh regimes, where 
the iterate $u^n$ is sufficiently far from the solution, the floating point
error, observed in the numerical experiments in Section~\ref{sec:numerics} 
remains on the order of $10^{-10}$ to $10^{-12}$.  However approaching the
asymptotic regime as $\nr{w^n}$ approaches $10^{-5}$ or $10^{-6}$, the linearization 
error is no longer observed to be $\bigo(\nr{w}^2)$, even where analytically
it should be. In this regime the linearization error is 
$\nr{\lin(u^n)} = \bigo(\nr{w^n})$, due to the pollution from the floating-point error. 
In terms of practical impact on a computational method such as the one described here, 
$\fl$ limits the regime
where the convergence rate can be accurately detected.  This is immaterial,
so long as detecting that convergence rate is no longer necessary once, for instance,
$\eps_T\nr{r^n} = \bigo(\fl)$, where $\eps_T$ is a set tolerance.
\end{remark}

The control of the right-hand side linearization error $\lin(u^n)$
is left to the choice of regularization terms $\alpha, \gamma_{10},  
\sigma_{01}$ and $\delta$.  It is remarked that $\lin(u^n)$ does 
not necessarily need to be second order with respect to $w^n$ for convergence
of the method: it only needs to be small enough not to interfere with 
the convergence rate.

Local convergence theory for Newton-like methods describes the convergence
of the iterates in a neighborhood near the solution, and is addressed
for regularized pseudo-time algorithms by the author in previous work
~\cite{Pollock14a,Pollock15a,Pollock15b}, based in part on the analysis of
\cite{BaRo80a,CoKeKe02,KeKe98} and \cite{Deuflhard11}.  In practice, however, 
the predicted convergence of rate of iteration 
\eqref{eqn:iteration_Q}-\eqref{eqn:iteration_rn}
is often oberved from the first few iterations
without a particularly good initial guess. Here the goal is to
characterize the convergence rate when the iterate $u^n$ is not sufficiently
close to the solution $u^\ast$ of $A(u;u) = f_Q$.
\begin{lemma}[Convergence rate far from the solution] 
\label{lemma:farconvergence}
Consider iteration~\eqref{eqn:iteration_mainQ} applied to the 
problem $A(u;u) = f_Q$.  Assume the regularization parameters,
$\alpha^n$ and $\sigma_{01}^n$,  satisfy the
following properties.
\begin{align}
\alpha^n \cdot \f 1 {\gamma_{10 }} \nr{R\w^n} 
& \le \f{\eps_T}{2} \nr{r^n}, \label{assume:alpha} \\
\nr{ \sigma_{01}^n A(u^n;w^n) + \lin(u^n) + \fl} 
& \le \nr{\lin(u^n) + \fl}. \label{assume:sigma}
\end{align}
Then 
\begin{align} \label{eqn:far001}
\nr{r^{n+1}} \le \left(1 - \f 1 {\gamma_{10}}\right) \nr{r^n} + \f {\eps_T}{2}
+  \nr{\lin(u^n) +  \fl}.
\end{align}
Moreover, if it holds that 
\begin{align}\label{eqn:far002}
\f{\nr{\lin(u^n) + \fl}}{\nr {r^n} } < \f{\eps_T}{2},
\end{align}
then the iteration~\eqref{eqn:iteration_mainQ} converges within tolerance
$\eps_T$ of the predicted rate $(1 - 1/\gamma_{10})$.
\end{lemma}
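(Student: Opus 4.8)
The plan is to treat the exact residual representation \eqref{eqn:comperr_003} as the starting point, so that the lemma reduces to a triangle-inequality estimate controlled by the two hypotheses \eqref{assume:alpha} and \eqref{assume:sigma}. I would first rewrite \eqref{eqn:comperr_003} with its last three terms deliberately grouped,
\[
r^{n+1} = \left(1 - \f 1{\gamma_{10}}\right) r^n + \f{1}{\gamma_{10}}\alpha^n R\w^n + \bigl(\sigma_{01}^n A(u^n;w^n) + \lin(u^n) + \fl\bigr),
\]
and then apply the triangle inequality to the three pieces. This grouping is the essential move: rather than bounding the Jacobian-regularization contribution $\sigma_{01}^n A(u^n;w^n)$ in isolation, one keeps it together with $\lin(u^n) + \fl$ precisely so that \eqref{assume:sigma} applies, certifying that the added Picard-like term does not inflate the norm of the linearization-plus-floating-point error.

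Next I would estimate each of the three pieces. For the leading factor I use that $\gamma_{10} \ge 1$ in the damping regime of Section~\ref{subsec:reg_abstract}, so that $|1 - 1/\gamma_{10}| = 1 - 1/\gamma_{10} \in [0,1)$ and the coefficient multiplies $\nr{r^n}$ without an absolute value. The middle term is bounded directly by \eqref{assume:alpha}, contributing at most $(\eps_T/2)\nr{r^n}$. The grouped term is bounded by \eqref{assume:sigma}, contributing at most $\nr{\lin(u^n) + \fl}$. Summing the three bounds yields \eqref{eqn:far001} (with the middle term carrying the factor $\nr{r^n}$).

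For the second assertion I would substitute \eqref{eqn:far002}, that is $\nr{\lin(u^n) + \fl} < (\eps_T/2)\nr{r^n}$, into the bound just obtained. The final term then becomes a second copy of $(\eps_T/2)\nr{r^n}$, giving
\[
\nr{r^{n+1}} \le \left(1 - \f 1{\gamma_{10}}\right)\nr{r^n} + \eps_T\,\nr{r^n},
\]
so that dividing by $\nr{r^n}$ exposes the realized contraction factor as $(1 - 1/\gamma_{10}) + \eps_T$, i.e. within $\eps_T$ of the predicted rate $(1 - 1/\gamma_{10})$.

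Because the genuinely technical content—the exact Taylor expansion of $r^{n+1}$ and its legitimacy through the commuting diagram \eqref{eqn:commute_quad}—is already folded into \eqref{eqn:comperr_003}, I do not anticipate a real obstacle so much as two points that demand care. The first is resisting any separate estimate of $\sigma_{01}^n A(u^n;w^n)$, since the whole purpose of the parameter $\sigma_{01}^n$ is the cancellation encoded in \eqref{assume:sigma}. The second is keeping track of the factor $\nr{r^n}$ on the $\eps_T/2$ term throughout, as it is exactly this factor that lets the two $\eps_T/2$ contributions combine into a single rate perturbation $\eps_T$ and close the \emph{moreover} statement.
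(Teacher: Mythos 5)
Your proposal is correct and follows exactly the paper's own argument: the paper's proof is the single sentence that \eqref{eqn:far001} ``follows directly from applying hypotheses \eqref{assume:alpha} and \eqref{assume:sigma} to the residual representation~\eqref{eqn:comperr_003},'' which is precisely your grouping-plus-triangle-inequality step, with the same substitution of \eqref{eqn:far002} closing the \emph{moreover} clause. Your observation that the $\eps_T/2$ term in \eqref{eqn:far001} should carry the factor $\nr{r^n}$ (as hypothesis \eqref{assume:alpha} produces it) is a correct reading of what appears to be a typo in the lemma statement, and is needed for the two $\eps_T/2$ contributions to combine into the rate perturbation $\eps_T$.
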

\begin{proof}
The bound~\eqref{eqn:far001} follows directly from applying hypotheses
\eqref{assume:alpha} and \eqref{assume:sigma} to the residual 
representation~\eqref{eqn:comperr_003}.  
\end{proof}
Ultimately, residual convergence at the predicted rate comes down to whether the
linearization error $\lin(u^n)$ can be controlled.  The following discussion 
investigates when this is computationally reasonable.  
Based on Assumption~\ref{assume:pdeN} on the problem data $\kappa'$, there exist
positive constants $\omega_A$ and $C_A$ with
\begin{align}\label{eqn:overLip}
\nr{(A_1'(u^n + tw^n; u^n) - A_1'(u^n; u^n))w^n} & \le t\, \omega_A \nr{\w^n}\nr{w^n}, 
    ~t > 0\\ \label{eqn:overBound}
\nr{A_1'(u^n; w^n; w^n)} & \le C_A \nr{\w^n}\nr{w^n}.
\end{align}
Applying \eqref{eqn:overLip}-\eqref{eqn:overBound} to the linearization error 
given by \eqref{eqn:comperr_004}, one obtains
\begin{align}\label{eqn:far003}
\nr{\lin(u^n)} & \le \left\| \int_0^1 (A_1'(u^n + t w^n;u^n) - A_1'(u^n;u^n))\w^n\, dt 
\right\|  + \left\| \int_0^1 A_1'(u^n + tw^n; w^n)\w^n \, dt \right\|\nonumber \\
         & \le \left( \f{\omega_A}{2} \nr{w^n} + C_A \nr{w^n } \right) \nr{\w^n}. 
\end{align}
Then from iteration \eqref{eqn:iteration_Q}
\begin{align}\label{eqn:far004}
\f {\nr{\lin(u^n)}}{\nr{r^n}}  
 \le \f{1}{\gamma_{10}}
\left( \f{\omega_A}{2} + C_A \right) \nr{w^n} \f{\nr{\w^n}}{\nr{M^n \w^n}}. 
\end{align}
While estimate \eqref{eqn:far004} is true, and it illustrates the role of $\gamma_{10}$
as a damping parameter, it may greatly overestimate the linearization error and
is not useful as a predictor of when \eqref{eqn:far002} will hold.

The source of the overestimate in this context is allowing for the maximum 
Lipschitz constant and bound on $\kappa'$ to be achieved uniformly 
over the domain.  Standard adaptive finite element methods are known to perform
well with relatively few local high contrast heterogeneities or singularities,
but are not necessarily appropriate for globally high contrast domains or coefficients, 
so it makes sense to understand how local high contrast can effect the convergence.
Writing $\lin(u^n)$ in terms of the inexact assembly operator $\cA_Q$
\begin{align}\label{eqn:far005}
\lin(u^n) &= -\int_{t = 0}^1 \cA_Q \left\{ 
   \int_{\Omega} (\kappa'(u^n + tw^n) - \kappa'(u^n))w^n \grad u^n \cdot 
   \grad \varphi^j  \right\}_{j = 1}^{n_{dof}} dt  \nonumber \\
  &- \int_{t = 0}^1 \cA_Q \left\{ 
   \int_{\Omega} \kappa'(u^n + tw^n)w^n \grad w^n \cdot
   \grad \varphi^j  \right\}_{j = 1}^{n_{dof}} dt.
\end{align}
Freezing the analysis about the iterate $u^n$, expression \eqref{eqn:far005} 
suggests partitioning $\Omega$ into $\Omega_n$, 
where milder bounds than $\omega_A$ and $C_A$ are realized, and 
$\Omega_n^C = \Omega \setminus \{\Omega_n\}$, where these bounds are locally 
attained.  
Rewriting \eqref{eqn:far005} in terms of a partition $\Omega = \Omega_n \cup \Omega_n^C$
\begin{align}\label{eqn:far006}
\nr{\lin(u^n) } \le \nr{\lin_1(u^n)} + \nr{\lin_2(u^n)},
\end{align}
with
\begin{align}\label{eqn:far007}
\lin_1(u^n) &= -\int_{t = 0}^1 \cA_Q \left\{ 
   \int_{\Omega_n} (\kappa'(u^n + tw^n) - \kappa'(u^n))w^n \grad u^n \cdot 
   \grad \varphi^j  \right\}_{j = 1}^{n_{dof}} dt  \nonumber \\
  &- \int_{t = 0}^1 \cA_Q \left\{ 
   \int_{\Omega_n} \kappa'(u^n + tw^n)w^n \grad w^n \cdot
   \grad \varphi^j  \right\}_{j = 1}^{n_{dof}} dt, \an \\ \label{eqn:far008}
\lin_2(u^n) &= -\int_{t = 0}^1 \cA_Q \left\{ 
   \int_{\Omega_n^C} (\kappa'(u^n + tw^n) - \kappa'(u^n))w^n \grad u^n \cdot 
   \grad \varphi^j  \right\}_{j = 1}^{n_{dof}} dt  \nonumber \\
  &- \int_{t = 0}^1 \cA_Q \left\{ 
   \int_{\Omega_n^C} \kappa'(u^n + tw^n)w^n \grad w^n \cdot
   \grad \varphi^j  \right\}_{j = 1}^{n_{dof}} dt.
\end{align}
From the data Assumption \ref{assume:pdeN} and the decomposition 
\eqref{eqn:far006}-\eqref{eqn:far008}, for each partition of the domain into
$\Omega_n$ and $\Omega_n^C$ there is a smallest constant 
$K_n(\Omega_n) \le (\omega_A/2+ C_A)$ with 
\begin{align}\label{eqn:far009}
\nr{\lin_1(u^n)} &\le \meas(\Omega_n) \cdot {K_n}  
                      \nr{w^n} {\nr{\w^n}}, \an \\ \label{far:010}
\nr{\lin_2(u^n)} &\le \meas(\Omega_n^C)\cdot {K_A} 
                      \nr{w^n} {\nr{\w^n}},
\end{align}
where $K_A \le \omega_A/2 + C_A$. With this structure in place, it follows that
the condition \eqref{eqn:far002} holds if there is a partition $\Omega_n$ for which
\begin{align}\label{eqn:far011}
\f 1 {\gamma_{10}}\left( \meas(\Omega_n) \cdot K_n 
                       + \meas(\Omega_n^C) \cdot K_A \right)\nr{w^n} 
\f{ \nr{\w^n} }{ \nr{M^n \w^n }} \le \f {\eps}{2},
\end{align}
for a given $\eps$.

The ratio $\nr{\w^n}/\nr{M^n \w^n} = \nr{(M^n)^{-1}r^n} / \nr{r^n}$ 
is related to the condition of the 
approximate Jacobian $M^n$ given by \eqref{eqn:iteration_Mn}, and is explicitly
dependent on the parameters $\gamma_{10}, \sigma_{01}$ and $\alpha$, as well
as implicitly dependent upon $\delta$. A large parameter $\gamma_{10}$ can clearly
control the scale of the linearization error at the start of the adaptive algorithm,
and if the steep gradients are bounded away from zero, a small scaling parameter
$\delta$ can control $(\meas(\Omega_n) \cdot K_n + \meas(\Omega_n^C) \cdot  K_A)$.
However, to attain convergence of the residual with $\gamma_{10} = 1$, in some
computationally available neighborhood of the solution $u^\ast$ to $A(u,u) = f_Q$,
the measure of the set on which a large Lipschitz constant and bound on the first 
derivative of $\kappa$ is realized must be relatively small. Otherwise, 
${\nr w}$ may need to  be  small enough that it is computationally infeasible
to find the basin of attraction.

A choice of regularization parameters is next described with respect to
the numerically assembled iteration~\eqref{eqn:iteration_Q}.
In particular, the parameter $\sigma_{01}$ guiding the Picard-like regularization
is based on the condition \eqref{assume:sigma}; and, the Tikhonov-like
regularization scaled by $\alpha$ is based on the condition \eqref{assume:alpha},
from Lemma \eqref{lemma:farconvergence}.

\section{Regularization parameter updates}\label{sec:param_updates}
A set of regularization parameters $\alpha, ~\gamma_{10}, \gamma_{01}$, and $\delta$
is now presented, along with a discussion of their properties.
The definition of $\gamma_{10}$ is consistent with $\gamma$ given in 
\cite{Pollock15b}, as is the definition of $\delta$.  A different definition of the
parameter $\alpha$ is given here, than in \cite{Pollock14a,Pollock15a,Pollock15b},
and the parameter $\gamma_{01}$ has not been previously introduced.  It is noted, 
however, that $\gamma_{01}$ effectively adds diffusion to the linearized system
by adding a Picard-like term to the Newton-like iteration.  
In \cite{Pollock15a}, the parameter $\sigma$ adds a frozen Newton-like iteration 
to stabilize the approximate Jacobian, essentially preventing small eigennvalues from
changing sign at each step.  The new parameter $\gamma_{01}$ performs a similar role,
but is more amenable to analysis, and appears to perform better in numerical 
experiments.

\subsection{Update of numerical dissipation, $\gamma_{10}$} 
\label{subsec:gamma10}
The first order regularization error is controlled by the numerical 
dissipation parameter, {\em i.e., } the Newmark parameter, $\gamma_{10}$.
The definition used here is recalled from \cite{Pollock15b}, Definition 4.3;
and framed in the context of an $L_2$ minimization as follows.
Rewriting \eqref{eqn:comperr_003} by moving the residual terms involving $r^n$ to the 
left-hand side and taking the $L_2$ norm of both sides of the resulting equation
\begin{align*}
\left\| (r^{n+1} - r^n) + \f 1{\gamma_{10}}r^n \right\| 
         = \left\| \f{1}{\gamma_{10}}\alpha^n R\w^n
         + \sigma_{01}A(u^n;w^n) 
         +\lin(u^n) + \fl \right\|.       
\end{align*}
An updated value of $\gamma_{10}$ is chosen to minimize the norm on the left,
namely
\begin{align*}
\f 1{\widetilde \gamma_{10}} = \argmin_{\nu \in \R}
\nr{(r^{n+1} - r^n) + \nu r^n}
&= \f {\langle r^n, r^{n} - r^{n+1}\rangle }{\nr{r^n}^2} \\ 
&= \f{\langle r^n, A(u^{n+1}; u^{n+1}) - A(u^n; u^n)\rangle}{\nr{r^n}^2}.
\end{align*}
The update of $\gamma_{10}$ is then defined by
\begin{align}\label{eqn:update_gamma10}
\tilde\gamma_{10} \coloneqq  
\f{\nr{r^n}^2}{\langle r^n, r^n - r^{n+1}\rangle}, \an  
\gamma_{10} \leftarrow \max\left\{ q \cdot \widetilde \gamma_{10}, 1 \right\},
\end{align}
for a user-set parameter $q$, with $0 < q < 1$.

The purpose of introducing the parameter $q$ is to enforce monotonicity of 
the sequence of parameters $\{\gamma_{10}^n\}$ to one at a given 
rate.  As shown in \cite{Pollock15b}, if $\gamma_{10}$ is updated when 
the residual reduction satisfies the following condition, then there is 
a critical value $\gmo$, after which $\gamma_{10}$ is assured to reduce
at a linear rate.  Those results are included in the following more general
lemma, which features a condition on the direction cosine of consecutive 
residuals to determine predictable reduction of $\gamma_{10}$.

Let $\gamma_{10,k}^n$ be the value of $\gamma_{10}$, 
on iteration $n$ of refinement $k$.  
For simplicity of notation, $\gamma_{10,k}^n$ will be denoted as $\gamma_{10}^n$.
For the update \eqref{eqn:update_gamma10} 
to remain bounded under the conditions that follow, 
the parameters $\GM$, the maximum allowed value of $\gamma_{10}$, and $\eps_T$, 
the rate tolerance used to determine whether $\gamma_{10}$ should be updated, 
are now introduced to satisfy the following condition.
\begin{condition}\label{cond:gmax_epsT} The adaptively-set regularization parameter $\gamma_{10}^n$, 
 and the user-set parameters $\GM$ and $\eps_T$ must satisfy the relation
\begin{align}
\gamma_{10}^n \le \GM < \f 1 {\eps_T}.
\end{align}
\end{condition}
The next condition, 
which is the same as Condition (2) of Criteria (4) in \cite{Pollock15b},
gives a necessary criterion for  update of $\gamma_{10}$ 
in order to establish the monotonicity result below.  
\begin{condition}[Condition for the update of $\gamma_{10}$]\label{cond:g10_1}
Given a rate tolerance $\eps_T$ satisfying Condition \ref{cond:gmax_epsT}, 
the ratio of consecutive residual norms must satisfy
\begin{align}\label{cond:gamma10_up}
\left|  \f{\nr{r^{n+1}}}{\nr{r^n}} - \left(1 - \f 1 {\gamma_{10}^n} \right) \right|
        < \eps_T.
\end{align}
\end{condition}
Then, the following result on the monotonicity of the update holds.
This next lemma generalizes the result Corollary 4.7 of \cite{Pollock15b}, 
which establishes the decrease in $\gamma_{10}$, as updated by
\eqref{eqn:update_gamma10} for $\gamma_{10}$ small enough with respect to parameters
$\eps_T$ and $q$.
For practical purposes, however, one may want to start the computation with
a larger value. The following result characterizes
the decrease $\gamma_{10}$ based on the direction cosine of consecutive residuals,
where the direction cosine is given by
\[
\cos(r,s) = \f{\langle r, s \rangle}{\nr{r}\nr{s}}, ~r, s \in \R^n.
\] 
\begin{lemma}[Preasymptotic decrease of $\gamma_{10}$]\label{lemma:gen_gmon}
Given a fixed parameter $0 <q < 1$,
a number $q < \bar q \le 1$, and a rate tolerance $\eps_T>0$ satisfying 
Condition \ref{cond:gmax_epsT}, if $\gamma_{10}^{n+1}$ is 
computed by \eqref{eqn:update_gamma10}, specifically
\begin{align}\label{eqn:compute_gamma10}
\gamma_{10}^{n+1} = 
\max\left\{1 \, , \, 
          q\cdot\f{\nr{r^n}^2}{ \langle r^n, r^n - {r^{n+1]} \rangle}} \right\},
\end{align}
upon satisfaction of
Condition \ref{cond:g10_1}, then 
\begin{align}\label{eqn:gen_gmon001}
\gamma_{10}^{n+1} < \overline q \gamma_{10}^n, ~\oor~ \gamma_{10}^{n+1} = 1,
\end{align}
whenever
\begin{align}\label{eqn:gen_gmon002}
\cos(r^n, r^{n+1}) < \f{\gamma_{10}^n - q/\overline q}{ \gamma_{10}^n(1 + \eps_T) - 1}.
\end{align} 
\end{lemma}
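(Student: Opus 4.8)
The plan is to reduce everything to two scalar quantities: the residual reduction ratio $\rho = \nr{r^{n+1}}/\nr{r^n}$ and the direction cosine $c = \cos(r^n, r^{n+1})$, and then track the single update formula \eqref{eqn:compute_gamma10} through elementary inequalities. Writing $\gamma = \gamma_{10}^n$ for brevity, the first step is to expand the denominator of the quotient defining $\widetilde\gamma_{10}$, using $\langle r^n, r^{n+1}\rangle = c\,\rho\,\nr{r^n}^2$:
\[
\langle r^n, r^n - r^{n+1}\rangle = \nr{r^n}^2 - c\,\rho\,\nr{r^n}^2 = \nr{r^n}^2(1 - c\rho),
\]
so that $q\,\widetilde\gamma_{10} = q/(1 - c\rho)$ and hence $\gamma_{10}^{n+1} = \max\{1,\, q/(1-c\rho)\}$ by \eqref{eqn:compute_gamma10}.

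Next I would dispose of the trivial alternative. Whenever $q/(1-c\rho) \le 1$ --- which in particular covers the case $c \le 0$, where $1 - c\rho \ge 1 > q$, as well as any case in which the inner product $\langle r^n, r^n - r^{n+1}\rangle$ is nonpositive --- the update returns $\gamma_{10}^{n+1} = 1$ and the second disjunct of \eqref{eqn:gen_gmon001} holds automatically. Thus it suffices to treat the case $q/(1-c\rho) > 1$, in which necessarily $c > 0$ and $0 < 1 - c\rho < q$, and to establish the strict bound $\gamma_{10}^{n+1} = q/(1 - c\rho) < \overline q\,\gamma$.

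The core step is a short chain of equivalent inequalities. Since $1 - c\rho > 0$ and $\overline q\,\gamma > 0$, the target $q/(1-c\rho) < \overline q\,\gamma$ is equivalent to $c\rho < 1 - q/(\overline q\,\gamma)$. Here the hypotheses enter: Condition \ref{cond:g10_1} supplies the one-sided bound
\[
\rho < \Big(1 - \tfrac{1}{\gamma}\Big) + \eps_T = \frac{\gamma(1+\eps_T) - 1}{\gamma},
\]
and since $c > 0$ I may multiply through to get $c\rho < c\,(\gamma(1+\eps_T)-1)/\gamma$. Substituting the cosine threshold \eqref{eqn:gen_gmon002} and cancelling the factor $\gamma(1+\eps_T) - 1$ (positive because $\gamma \ge 1$ and $\eps_T > 0$) collapses the right-hand side exactly to $(\gamma - q/\overline q)/\gamma = 1 - q/(\overline q\,\gamma)$, which is the desired inequality.

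The main thing to get right --- and the only place where care is genuinely needed --- is the bookkeeping of signs and the case split guaranteeing $1 - c\rho > 0$, $c > 0$, and $\gamma(1+\eps_T) - 1 > 0$, since each algebraic step above is a multiplication or division by one of these quantities and would reverse direction if any were negative. The positivity of $\gamma - q/\overline q$, which follows from $q < \overline q \le 1 \le \gamma$, is what makes the threshold in \eqref{eqn:gen_gmon002} a genuine and nonvacuous bound in the regime $c > 0$. Everything else is the routine algebra of a Rayleigh-type quotient; in particular, no use is made of the PDE data in Assumption \ref{assume:pdeN} or of the linearization-error estimates, so the result is purely a statement about the scalar recursion \eqref{eqn:compute_gamma10} under Condition \ref{cond:g10_1}.
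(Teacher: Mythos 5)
Your proof is correct and follows essentially the same route as the paper's: rewrite the update \eqref{eqn:compute_gamma10} as $q/(1-c\rho)$ with $c=\cos(r^n,r^{n+1})$ and $\rho = \nr{r^{n+1}}/\nr{r^n}$, use Condition \ref{cond:g10_1} to bound $\rho$, split on the sign of $c$, and do the algebra linking the cosine threshold \eqref{eqn:gen_gmon002} to the bound $\overline q\,\gamma_{10}^n$. The only difference is organizational --- you argue forward from the threshold to the conclusion and make the sign/positivity bookkeeping explicit, whereas the paper works backward by solving $\widehat\gamma_{10} \le \overline q\,\gamma_{10}^n$ for the cosine --- but the mathematical content is identical.
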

This includes the previous result of 
Corollary 4.7 in \cite{Pollock15b}, as the right-hand side of~\eqref{eqn:gen_gmon002}
satisfies
\begin{align}\label{eqn:gmon_def}
 \f{\gamma_{10}^n - q/\overline q}{ \gamma_{10}^n(1 + \eps_T) - 1}\ge 1,
~\text{ for }~ \gamma_{10}^n \le \gmo(\overline q)
      \coloneqq \f 1 {\eps_T} \left( 1 - \f q {\overline q}\right).
\end{align}
\begin{proof}
Rewriting the update~\eqref{eqn:compute_gamma10} in terms of the direction cosine
\begin{align}
\widehat \gamma_{10} 
 \coloneqq q \cdot \f {\nr{r^n}^2}{{\nr{r^n}}^2 - \langle r^n, r^{n+1} \rangle} 
 = q \cdot \f{1}{1 - \cos(r^n, r^{n+1}) \f{\nr{ r^{n+1} } }{ \nr{r^n} } }
\label{eqn:gen_gmon003}
\end{align}
Applying Condition~\ref{cond:g10_1}, the denominator on the 
right-hand side of ~\eqref{eqn:gen_gmon003} satisfies the inequality
\begin{align}
\f 1 {\gamma_{10}^n} - \eps_T 
   \le 1 - |\cos(r^n,r^{n+1})|\left(1 - \f 1 {\gamma_{10}^n} + \eps_T \right) 
  \nonumber 
  < 1 - \cos(r^n,r^{n+1}) \f{\nr{r^{n+1}}}{ \nr{r^n}},
\end{align}
yielding, for $\cos(r^n, r^{n+1}) > 0$
\begin{align}\label{eqn:gen_gmon004}
\widehat \gamma_{10 }< \f{q}{1 - \cos(r^n,r^{n+1})(1 - 1/\gamma_{10}^n + \eps_T)}. 
\end{align}
Applying \eqref{eqn:gen_gmon004} to bound the desired inequality 
$\widehat \gamma \le \bar q \gamma_{10}^n$, and solving for $\cos(r^n, r^{n+1})$
yields the result \eqref{eqn:gen_gmon001} on satisfaction of~\eqref{eqn:gen_gmon002}.  
For $\cos(r^n,r^{n+1}) \le 0$, the 
result is clear directly from \eqref{eqn:gen_gmon003}.
\end{proof}
Applying the bound from the update condition \eqref{cond:gamma10_up} however 
yields an overly pessimistic view of when the update will decrease, and 
the sufficient condition for decrease of $\gamma_{10}$ given by
\eqref{eqn:gen_gmon002} has been observed in practice to hold only where the 
less general \eqref{eqn:gmon_def} also holds, for $\overline q =1$. 
This is because as the iterations are converging, $\cos(r^n, r^{n+1})$ is generally
close to one. 
The following corollary gives a reliable predictor involving minimal computation,
of when an update will decrease.
\begin{corollary}\label{cor:fast_gamma_decr}
On the hypotheses of Lemma~\ref{lemma:gen_gmon}, namely, given a fixed parameter
$0<q<1$, a number $q < \overline q \le 1$, and a rate tolerance $\eps_T>0$ satisfying
Condition \ref{cond:gmax_epsT}, if $\gamma_{10}^{n+1}$ is computed by
\eqref{eqn:compute_gamma10}, upon satisfaction of Condition \ref{cond:g10_1},
then 
\begin{align}\label{eqn:cor_decr_001}
\gamma_{10}^{n+1} < \overline q \gamma_{10}^n, \oor \gamma_{10}^{n+1} = 1,
\end{align}
whenever
\begin{align}\label{eqn:cor_decr_002}
\breve \eps < \f{1}{\gamma_{10}^n} \left(1 - \f q{\overline q} \right),
\quad ~\text{ for }~ 
\breve \eps \coloneqq 
 \f{\nr{r^{n+1}}}{\nr{r^n}} - \left(1- \f 1 {\gamma_{10}^n} \right). 
\end{align}
\end{corollary}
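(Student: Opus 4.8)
The plan is to sidestep the $\eps_T$-dependent cosine bound \eqref{eqn:gen_gmon002} of Lemma~\ref{lemma:gen_gmon} and argue directly from the direction-cosine form \eqref{eqn:gen_gmon003} of the update. Abbreviating $c = \cos(r^n, r^{n+1})$ and $\rho = \nr{r^{n+1}}/\nr{r^n}$, equation \eqref{eqn:gen_gmon003} reads $\widehat\gamma_{10} = q/(1 - c\rho)$. The observation that drives everything is that the quantity $\breve\eps$ in \eqref{eqn:cor_decr_002} is precisely the signed deviation of $\rho$ from the predicted rate, so that $\rho = (1 - 1/\gamma_{10}^n) + \breve\eps$ holds identically.

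First I would recast the hypothesis \eqref{eqn:cor_decr_002} as a bound on $\rho$. Substituting the inequality $\breve\eps < \frac{1}{\gamma_{10}^n}(1 - q/\overline q)$ into this identity gives $\rho < (1 - 1/\gamma_{10}^n) + \frac{1}{\gamma_{10}^n}(1 - q/\overline q)$, and after the $1/\gamma_{10}^n$ terms cancel this reduces to $\rho < 1 - q/(\overline q\, \gamma_{10}^n)$. Because $q < \overline q \le 1$ and $\gamma_{10}^n \ge 1$ (the value is always formed as a maximum with $1$ in \eqref{eqn:compute_gamma10}), the right-hand side lies in $(0,1)$, so in particular $\rho < 1$.

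Next I would pass from the bound on $\rho$ to a bound on the product $c\rho$, which is what actually sits in the denominator of \eqref{eqn:gen_gmon003}. Since $c \le 1$ and $\rho \ge 0$, one has $c\rho \le \rho$ in every case, trivially when $c \le 0$ and by $c \le 1$ when $c > 0$. Chaining this with the previous step yields $c\rho \le \rho < 1 - q/(\overline q\, \gamma_{10}^n)$, hence $1 - c\rho > q/(\overline q\, \gamma_{10}^n) > 0$. This positivity makes \eqref{eqn:gen_gmon003} well defined and, upon inverting the inequality, gives $\widehat\gamma_{10} = q/(1 - c\rho) < \overline q\, \gamma_{10}^n$. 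Taking $\gamma_{10}^{n+1} = \max\{1, \widehat\gamma_{10}\}$ as in \eqref{eqn:compute_gamma10} then produces the dichotomy \eqref{eqn:cor_decr_001}: either $\widehat\gamma_{10} \le 1$ and $\gamma_{10}^{n+1} = 1$, or else $\gamma_{10}^{n+1} = \widehat\gamma_{10} < \overline q\, \gamma_{10}^n$.

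The step I expect to be the crux is recognizing that replacing the product $c\rho$ by its upper bound $\rho$ is exactly what eliminates the dependence on $\eps_T$ that complicates \eqref{eqn:gen_gmon002}: once the hypothesis is rewritten as $\rho < 1 - q/(\overline q\, \gamma_{10}^n)$, the trivial bound $\cos(r^n, r^{n+1}) \le 1$ does all the remaining work. Condition~\ref{cond:g10_1} enters only to certify that the update \eqref{eqn:compute_gamma10} is actually carried out on this iteration, and is not used in the decrease estimate itself; this is what makes \eqref{eqn:cor_decr_002}, which involves only the single directly observable ratio $\nr{r^{n+1}}/\nr{r^n}$, a practical predictor of decrease.
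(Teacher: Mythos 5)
Your proof is correct and takes essentially the same route as the paper's: both rest on the direction-cosine form \eqref{eqn:gen_gmon003} of the update together with the exact identity $\nr{r^{n+1}}/\nr{r^n} = 1 - 1/\gamma_{10}^n + \breve\eps$, and both close with the trivial bound $\cos(r^n,r^{n+1}) \le 1$ --- the paper by solving for the critical cosine value \eqref{eqn:cor_decr_004} and observing that \eqref{eqn:cor_decr_002} forces it above one, you by the algebraically equivalent step $\cos(r^n,r^{n+1})\,\rho \le \rho < 1 - q/(\overline q\, \gamma_{10}^n)$ with $\rho = \nr{r^{n+1}}/\nr{r^n}$. Your rearrangement has the minor virtue of making explicit the positivity of the denominator $1 - \cos(r^n,r^{n+1})\,\rho$, which the paper leaves implicit.
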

The proof follows similarly to Lemma~\ref{lemma:gen_gmon}, with $\breve \eps$ 
taking the place of $\eps_T$.
\begin{proof}
From \eqref{eqn:compute_gamma10} and $\breve \eps$ given by \eqref{eqn:cor_decr_002}
\begin{align}\label{eqn:cor_decr_003}
\widehat \gamma_{10} = \f {q}{ 1 - \cos(r^n, r^{n+1})(1 - 1/\gamma_{10}^n + \breve \eps)}.
\end{align}
Solving for $\cos(r^n, r^{n+1})$ to satisfy \eqref{eqn:cor_decr_001} yields the 
equality, {\em c.f., }\eqref{eqn:gen_gmon002}
\begin{align}\label{eqn:cor_decr_004}
\cos(r^n, r^{n+1}) 
= \f{\gamma_{10}^n - q/\overline q }{\gamma_{10}^n(1 + \breve \eps)-1 },
\end{align}
which is assured to hold whenever the right-hand side of~\eqref{eqn:cor_decr_004}
is greater than one, from which the sufficient condition \eqref{eqn:cor_decr_002}, 
for the result ~\eqref{eqn:cor_decr_001}, follows.
\end{proof}
This yields a reliable predictor requiring minimal computation to check if a given update
of $\gamma_{10}$ can be assured to decrease the parameter.  Such a condition can
be enforced if the sequence of parameters is required to decreease monotonically.

\subsection{Picard-like regularization, $\sigma_{01}$} 
\label{subsec:gamma01}
Referring to the inexact iteration given by
\eqref{eqn:iteration_Q} - \eqref{eqn:iteration_rn},
a strict Newton-like iteration prescribes $\gamma_{01} = 1 = \gamma_{10}$,
{\em i.e.,} $\sigma_{10} = 0$
while a strict Picard-like iteration prescribes $\gamma_{01} = 1$ and
$\gamma_{10} = 0$.  From the current generalized standpoint, $\sigma_{01}$
controls the additional diffusion-like term $A(u^n;w^n)$, which is 
proposed here to balance the linearization error 
$\lin(u^n)$, given by ~\eqref{eqn:comperr_004},
stabilizing the iteration by adding diffusion to the system.  

As seen
in the residual representation~\eqref{eqn:comperr_003}, the computation
of the linearization error up to the contribution from the floating-point
error $\fl$,  can be accomplished by subtracting the remaining
terms to the other side of the equation.
This is viewed as preferable to the computation of $\lin(u^n)$ directly
by the definition \eqref{eqn:comperr_004}, as it requires only 
matrix-vector multiplications, and in particular does not require the assembly
of the term $A_1'(u^n;u^{n+1})\w^n$.

Minimizing the $L_2$ of the sum of the Picard-like regularization and linearization
error based on the latest information, to determine a new value for
$\sigma_{01}$ yields
\begin{align}\label{eqn:sigma001}
\tilde \sigma &= \argmin \nr{\sigma A(u^{n+1};w^n) + \lin(u^n)}
\\ \nonumber
&= -\f{\langle \lin(u^n), \,
              A(u^{n+1};w^{n}) \rangle}{\nr{A(u^{n+1};w^{n})}^2}.
\end{align} 
Up to pollution by floating-point error $\fl$, and allowing only positive
contributions from $\sigma_{10}$, {\em i.e.,} applying this regularization 
only when it adds to, not subtracts from, diffusion to the system
\begin{align}\label{eqn:sigma002}
\sigma_{01}^{n+1}  & = \max\left\{0\, , \,  
   \f{\left\langle -r^{n+1} + (1-\f{1}{\gamma_{10}})r^{n} 
   +\f{\alpha^{n}}{\gamma_{10}}
    R\w^{n} + \sigma_{01}^{n}A(u^{n+1};w^{n}), \,
                            A(u^{n+1};w^{n}) \right\rangle}
                    {\nr{A(u^{n+1};w^{n})}^2} \right\}.
\end{align}

Then the Picard-like regularization is given by
\begin{align}\label{eqn:update_gamma01}
\gamma_{01}^n & = \gamma_{10}(1 + \sigma_{01}^n). 
\end{align}
This definition given by~\eqref{eqn:sigma002}-\eqref{eqn:update_gamma01} 
uses the information from the latest iterate to add 
problem-dependent diffusion to have the greatest cancellation effect
on the linearization error.

\subsection{Tikhonov-like regularization, $\alpha^n$}
\label{subsec:alpha}
The parameter $\alpha^n$ which scales the regularization term $\phi(w,v)$ 
is seen to come from the inverse of the pseduo-time
step, as in \eqref{eqn:reg_004}.  However, as discussed in \cite{Pollock14a,Pollock15a},
this parameter is also seen to scale the analogous regularization term 
found by applying a Tikhonov-type regularization to the linearized system
\cite{Engl1996}. 
In the current numerical experiments, as well as previous ones by the author,
the regularization term 
$\phi(w,v) \coloneqq \int_\Omega \beta(x,u) \grad w \cdot \grad v$, where $\beta$
may be as simple as the identity, or it may be a cutoff function computed once on 
each mesh refinement, or a function of $u^0$, the initial iterate on each refinement.
While $\beta$ could be chosen as function of $u$ and updated on each iteration, this 
would increase assembly costs which can already be high compared to the solve-time
in the preasymptotic regime. The experiments in Section \ref{sec:numerics} show
$\beta = 1 + |\kappa'(u_k^0)|$, on refinement $k$, 
in the first example; and, $\beta = 1$ in the second example.  
The is as opposed to the Picard-like regularization 
controlled by $\sigma_{01}^n$ which
adds diffusion scaled by the latest $\kappa(u^n)$ on each iteration.  
The proposed parameter is scaled by
$\gamma_{10}/\nr{R w^n}$, and is guided by the contribution of the remaining
second order terms, so long as it does
not interfere with the convergence rate. 
\begin{align}\label{eqn:alpha0}
\alpha^0 &= \nr{r^0}, 
\\ \label{eqn:update_alpha}
\alpha^{n+1} & = \f{\gamma_{10}}{\nr{R\w^{n}}} \cdot 
\min\left\{\left\| r^{n+1} - \left( 1-\f{1}{\gamma_{10}} \right) r^n 
 - \f{1}{\gamma_{10}} \alpha^nR\w^n \right\|, \, \f{\eps_T}{2}\nr{r^{n+1}}  \right\},
\quad n \ge 1.
\end{align}
This is in contrast to the scaling of the Tikhonov term proposed in 
\cite{Pollock14a,Pollock15a,Pollock15b}, which is guided by the norm of the
residual.  The new definition chooses a generally smaller term for $\alpha^n$,
so as not to interfere with the convergence rate farther from the solution,
{\em i.e., } in the preasymptotic and coarse mesh regimes.
The plots of the terminal $\alpha$ and $\alpha \nr{R\w}$ on each refinement
for the examples in Section \ref{sec:numerics} highlight the importance of 
normalizing $\alpha$ against $\nr{R\w^n}$ to control the contribution from this
regularization.
It is noted as well that applying~\eqref{eqn:alpha0} and \eqref{eqn:update_alpha}
selects a larger regularization parameter on the first iteration of each refinement,
which stabilizes the correction from the interpolation of the previous solution onto the
finer mesh.

\subsection{Inexact scaling regularization, $\delta$}
\label{subsec:delta} 
The scaling parameter $\delta$ is adjusted after the last iteration 
on each refinement.  After the iteration completes on refinement $k$,
let the final iterate $u_k$ be indexed by $u^{n+1}$.  Then rearranging
terms in~\eqref{eqn:iteration_mainQ}, the residual $r^n$ satisfies
\begin{align}\label{eqn:updatedelta_001}
-r^n =   \alpha^nR\w^n + \gamma_{10}(A_1'(u^n;u^n) + A_2'(u^n))\w^n
    +\sigma^n\gamma_{10}A(u^n;w^n). 
\end{align}
Define now a new quantity $r^\cL$ to satisfy
\begin{align}\label{eqn:updatedelta_001a}
-r^\cL =  \alpha^nR\w^n + \gamma_{10}(A(u^{n+1}; u^{n+1}) - A(u^n; u^n))
    +\sigma^n\gamma_{10}A(u^n;w^n) + A(u^n; u^n), 
\end{align}
where the Jacobian terms of \eqref{eqn:updatedelta_001} have been replaced by the
difference $A(u^{n+1};u^{n+1}) - A(u^n; u^n)$, which they approximate.
Then the difference between \eqref{eqn:updatedelta_001a} 
and \eqref{eqn:updatedelta_001}, is given by
\begin{align}\label{eqn:updatedelta_001b}
-(r^{\cL} - r^n) = \gamma_{10} \, \lin(u^n).
\end{align}
The number $\widetilde \delta$ is now set so that $\widetilde \delta - A(u^n; u^n)$ 
approximates $r^\cL$.  In particular
\begin{align}\label{eqn:updatedelta_001c}
\widetilde \delta = \argmin \nr{ \widetilde \delta f_Q - A(u^n; u^n)  - r^\cL}
          = \argmin\nr{ (\widetilde \delta - \delta_k)f_Q + \gamma_{10} \, \lin(u^n)},
\end{align}
where the last equality follows by \eqref{eqn:updatedelta_001b} and 
\eqref{eqn:iteration_rn}.
Up to floating point error, this may be computed by
\begin{align}\label{eqn:updatedelta_003}
\tilde \delta = \f{\langle f_Q, 
          \alpha^n R\w^n + \gamma_{10}(A(u^{n+1};u^{n+1}) - A(u^n; u^n))
         +\sigma^n \gamma_{10}A(u^{n}, w^n) + A(u^n; u^n) \rangle  }
         {\nr{f_Q}^2}.
\end{align}
Then, for a user set parameter $0 < q_{k} < 1$ as in \eqref{eqn:update_gamma10}, 
$\delta_{k+1}$ is set by
\begin{align}\label{eqn:updatedelta_004}
\delta_{k+1} = \max\left\{ \f 1 {q_k} \tilde \delta \, , \,  1 \right\}.
\end{align}
This update then satisfies the property that $\delta_k = 1$ for all $k\ge K$ for 
some finite $K$, as shown in~\cite{Pollock15b}.
In particular, \eqref{eqn:updatedelta_001c}, demonstrates that
$\widetilde \delta = \delta_{k}$, if the linearization at $u^n$ is exact.  
In this case $\delta_{k+1} = q_k^{-1}\delta_k$.  The adjustment in $\delta$
is seen to systematically reduce the residual regularization $S^{res}$, 
while maintaining sensitivity to the component of the linearization error
along the direction of the source.

The parameter $q_k$ may be taken as the constant $q$ used in 
\eqref{eqn:compute_gamma10}, the computation of $\gamma_{10}$, or may be updated
to increase the parameter $\delta$ more aggressively when $\gamma_{10}$ has
been updated more often or is sufficiently close to one.  For instance
\begin{align}\label{eqn:qfordelta}
q_k = \min\{q^P, q^{(1 + 1/\gamma_{10})} \},
\end{align} 
where $P$ is the number of updates of $\gamma_{10}$ on refinement $k$.
\subsection{Regularized adaptive algorithm}
\label{subsec:adaptive_algorithm} 
The regularized adaptive algorithm
of~\cite{Pollock15b} effectively traversed the coarse mesh and preasymptotic
regimes starting from a coarse mesh where the solution-dependent coefficients 
of~\eqref{eqn:weak} were unresolved. The described method was demonstrated to
uncover the internal layers and arrive at the asymptotic phase of Newton iterations 
for the well-resolved problem.
The method was not uniformly efficient, however, especially for large values
of the numerical dissipation parameter.  In particular, the method was
allowed to continue to iterate while converging at the predicted rate until
the norm of the residual dropped sufficiently below the level of the 
previous residual. This resulted in longer computational times in the
coarse mesh regime, where the linear problems to solve are 
significantly smaller than in the asymptotic regime, but the linear convergence
rate may be very slow.  
To remedy this situation, the current algorithm is accelerated by splitting 
it into three phases, roughly corresponding to the coarse mesh, 
preasymptotic and asymptotic phases.  

A modified strategy to update the numerical dissipation and 
exit the iterations in the first
phase allows significantly faster progression to the
second phase, roughly correlated to the preasymptotic regime
A modified parameter-update in the
second phase leads to faster progression to third phase, correlated to the
asymptotic regime; and, prevents stalling of the final parameter updates due
to pollution from the floating-point error.

The first phase, $\gamma_{10} > \gmo$, exits early upon update of $\gamma_{10}$, 
and does not require residual reduction. 
The second phase, $\gmo \ge \gamma_{10} > 1$,
corresponds to the preasymptotic phase.  The updates of $\gamma_{10}$ are
guaranteed to be monotonically decreasing in this phase as developed in 
\cite{Pollock15b}.  The third or final
stage corresponds to the asymptotic phase of the algorithm where
the iterations converge quadratically as is standard for Newton methods once
the solution iterate has entered the basin of convergence.

\subsubsection{User-set parameters}\label{subsubsec:userset}
The following user-set parameters are used in the adaptive algorithm. The first 
three, $\GM,$ the maximum value of $\gamma_{10}$; $\eps_T$, the rate-tolerance; and
$q,$ the reduction factor used in the parameter update ~\eqref{eqn:update_gamma10} 
for the update of $\gamma_{10},$ 
can be consolidated into
two.  From  \eqref{cond:gamma10_up}, and as discussed in \cite{Pollock15b}, 
convergence of the residual under this
regularization structure requires $\eps_T < 1/\GM$, so it is natural to define
$\eps_T = q/\GM$. Then setting $0 < q < 1$ and $\GM > 1$ defines both $\eps_T$ and 
$\gmo = \GM(1/q -1)$, from \eqref{eqn:gmon_def}, for which the update guarantees
monotonic decrease of $\gamma_{10}$ whenever $\gamma_{10} < \gmo$, in accordance
with Lemma \ref{lemma:gen_gmon}. 

The two remaining parameters are standard for iterative methods. The tolerance for
the residual denoted $\tol$ used to exit iterations in the asymptotic regime is 
set to a constant value: $\tol = 10^{-7}$ in the current results.  The maximum
number of iterations allowed $\itmax$ is set to a default of 20 iterations, 
but modified in the preasymptotic phase where $\gamma_{10} > 1$, 
to allow residual reduction with iterations converging at the 
accepted rate $1 - 1/(2 \, \gamma_{10})$, by
\begin{align}
\itmax = 1+\ceil(\log\nr{r_{k-1}} - \log\nr{r^0})/\log((1-1/(2\, \gamma_{10})),
\end{align}
where $r_{k-1}$ denotes the terminal residual on refinement $k-1$,  
$r^0$ denotes the initial residual on refinement $k$, and
$\ceil(\cdot)$ denotes the ceiling function.

\subsubsection{Incomplete solves: exiting the iterations}\label{subsubsec:exit}
One of the key features of the regularized method is the early-exit of the 
nonlinear iterations in the pre-asymptotic and coarse mesh regimes.  The exit
is governed by observed convergence of the iterations at the predicted rate.  
This shows the linearization error is low enough to extrapolate stability of 
the approximate Jacobian at the current iterate.  As such, the interpolation of 
that iterate onto the refined mesh is suitable for a starting guess fo the next
nonlinear solve.   In contrast to the methods of \cite{Pollock15a,Pollock15b}, the
first exit-condition allows exit from the iterations on the conditions for 
updating $\gamma_{10}$, in the initial phase where $\gamma_{10}> \gmo$.
In particular, residual reduction is only enforced once $\gamma_{10}$ is small
enough to assure decrease towards one on every update.
\begin{criteria}[Exit criteria]\label{criteria:exit}
Let $\beta^{n+1} = \nr{r^{n+1}}/\nr{r^n}$.
The nonlinear iterations are terminated on iteration $n$ of level $k$ on 
satisfaction of one of the following 
sets of conditions.
\begin{enumerate}
\item The first set of conditions accelerates the algorithm through the coarse mesh
regime. 
\begin{align}
 \gamma_{10} &> \gmo, \label{exit:cm1}\\
|\beta^n - \beta^{n-1}|  &\le \eps_T, \label{exit:cm2}\\
\left| \beta^n - \left(1 - \f 1 {\gamma_{10}} \right)  \right| &< \eps_T, 
\label{exit:cm3} \\
n &> 2 \label{exit:cm4}.
\end{align}
\item The second set of conditions feature sufficient reduction 
of the residual, a relaxed convergence rate tolerance, and a stability criterion.
This set of conditions allows for successful exit of the iterations in the preasymptotic 
regime. 
\begin{align}
\nr{r^{n+1}} &< \nr{r^n}, \label{exit:pa1}\\
\nr{r^n}     &\le \min \{ \nr{r^0},\nr{r_{k-1}} \}, \label{exit:pa2}\\
\beta        &< \left(1 - \f 1 {2 \gamma_{10}} \right), \label{exit:pa3}\\
| \beta^{n-1} - \beta^n| &\le \f{\eps_T}{2}. \label{exit:pa4} 
\end{align}
\item The third condition allows successful exit in the asymptotic regime: the 
iterations have converged to tolerance.
\begin{align}
\nr{r^{n+1}} \le \tol. \label{exit:ar}
\end{align}
\item The fourth exit condition detects failure of the iterations to converge:
either sufficient increase of the residual, or reaching the maximum number 
of iterations.
\begin{align}
\beta > 1 + \f 1 {\gamma_{10}}, ~\oor~ n > \itmax. \label{exit:fail}
\end{align}
\end{enumerate}
\end{criteria}

\subsubsection{Regularization update conditions}
In agreement with \eqref{cond:gamma10_up}, 
the numerical dissipation parameter $\gamma_{10}$ is
updated on satisfaction of
\begin{align}\label{condition:update_gamma}
\gamma_{10}^n > 1, \quad \eqref{exit:cm2}-\eqref{exit:cm3}, 
\an \gamma_{10}^n = \gamma_{10}^{n-2}.
\end{align} 
The last condition requires the parameter $\gamma_{10}$ is updated at most
every three iterations, allowing the iterations to stabilize and the comparison
of convergence rates over three iterations to be meaningful.

The scaling parameter $\delta$ is updated after the terminal iteration on 
satisfaction of 
\begin{align}\label{condition:update_delta}
\delta_k < 1, \an \text{Conditions (1), (2) or (3) of Criteria~\ref{criteria:exit}}. 
\end{align}
The parameter updates, exit criteria and adaptive mesh refinement are 
summarized in the following regularized adaptive algorithm.
\begin{algorithm}
[Algorithm using the inexact iteration~\eqref{eqn:iteration_Q}]
\label{algorithm:basic}
Set the parameters $q$ and $\GM$. 
Start with initial $u^0$, $\gamma^0$, $\delta_0$ and $\sigma_0^0 = 0$.
On partition $\cT_k, ~ k = 0, 1, 2, \ldots$
\begin{enumerateX}
\item Compute regularization matrix $R_k$.
\item Set $r^0 = \delta f_Q-A(u^0;u^0)$, and set $\alpha_0 = \nr{r^0}$.
      Set $\sigma_k^0 = \sigma_{k-1}, ~k \ge 1$.
\item While Exit criteria~\ref{criteria:exit} are not met on iteration $n-1:$
\begin{enumerateY}
  \item Solve \eqref{eqn:iteration_Q} for $\w^n$.
  \item Update $u^{n+1} = u^n + w^n$, 
        and $r^{n+1} = \delta_Q f_k-A(u^{n+1};u^{n+1})$.
  \item If the conditions~\ref{condition:update_gamma} are satisfied, 
        update $\gamma_{10}^{n+1}$ by \eqref{eqn:compute_gamma10}.
  \item Set $\sigma_{01}^n$ by to~\eqref{eqn:sigma002},
        and set $\alpha^n$ by to \eqref{eqn:update_alpha}.
\end{enumerateY}
  \item If Condition~\ref{condition:update_delta} is satisfied,
        update $\delta_{k+1}$ for partition $\cT_{k+1}$ according
        to \eqref{eqn:updatedelta_003}-\eqref{eqn:updatedelta_004}, 
        with $q_k$ set by \eqref{eqn:qfordelta}. 
  \item Compute the error indicators to determine the next
        mesh refinement.
\end{enumerateX}

\end{algorithm}

The numerical results in the following section are computed using standard 
{\em a posteriori} residual-based error indicators, 
as in for instance~\cite{Stevenson07}.  For the nonlinear anisotropic 
problem~\eqref{eqn:anisotropic}
the local indicator for element $T \in \cT_k$ with $h_T$ the element diameter is 
given by
\begin{align}
\label{eqn:jump_indicators}
\zeta_T^2(v) &= \zeta_{\cT_k}^2(v,T) \coloneqq h_T \nr{ J_T(v)  }_{L_2(\pa T)}^2 \\
\label{eqn:indicators}
\eta_T^2(v) &= \eta_{\cT_k}^2(v,T) \coloneqq h_T^2 
\left\| 
\sum_{i,j = 1}^n \f{\pa}{\pa x_j} \left( \kappa_{ij}(v) \f{\pa }{\pa x_j}v\right) 
                 +f \right\|_{L_2(T)}^2 
+  \zeta_T^2(v),
\end{align}
$J_T(v) \coloneqq \llbracket \sum_{i,j=1}^n\kappa_{ij}(v) \f{\pa}{\pa x_j} v  \cdot n_i 
                  \rrbracket_{\pa T}$,
with jump
$\llbracket \phi \rrbracket_{\pa T} 
\coloneqq {\lim_{t \goto 0} \phi(x + t n) - \phi(x - tn)}$,
 where  $n= (n_1, n_2)$ is the appropriate outward normal defined on $\pa T$.
The error estimator on partition $\cT_k$ is given by the $l_2$ sum of
indicators $\eta_{\cT_k}^2 = \sum_{T \in \cT_k}\eta_{T}^2$.

\section{Numerical results}\label{sec:numerics}
Two numerical examples illustrate instances where different terms in the
regularization are active to stabilize the iterations. The first example
demonstrates Algorithm \ref{algorithm:basic} on a problem where $\kappa'(s)$ is Lipschitz
but not uniformly differentiable.  In particular, it has a corner at $s = 1/2$.
This model problem shows an anisotropic shift in solution-dependent diffusion, and
features steep gradients in the diffusion coefficient. The approximate Jacobian
requires continued regularization from both the Picard-like term controlled by 
$\sigma_{01}$, and the Tikhonov-like term, scaled by $\alpha$.
In the second example, the regularization is driven by $\gamma_{10}$ and $\delta$
in the preasymptotic phase while a thin internal layer in the diffusion is 
uncovered.  For this problem, the regularization has diminished importance in 
the asymptotic regime, whereas computing a sequence of iterates that enter 
the asymptotic regime strongly depends on the regularization.

The simulations were performed with a Python implementation
of the FEniCS library~\cite{LMW12a}, with the parameter computations, specifically
where sparse matrix-vector products are necessary, computed with the PETSc 
backends \cite{petsc-user-ref}. Running the simulations on an 4GHz Intel Core i7 
iMac, the first example runs to full residual convergence in less than 150 sec., on 
refinement level 33;  and the second in under 20 sec., on refinement level 36.  

In both examples the initial iterate $u_0^0 = 0$, 
and thereafter $u_k^0$ is interpolated from $u_{k-1}$, the terminal iterate on
refinement $k-1$, onto the refined mesh partition of level $k$. 
The initial scaling parameter $\delta = 1/\GM$, and $\GM$ is specified in each 
example.  Both examples use the regularization reduction factor $q = 0.865$.
Each simulation is discretized with linear Lagrange basis elements and was started with 
a uniform initial mesh partition of 144 elements.

\begin{example}[Anisotropic diffusion]\label{example1} Consider the 
anisotropic diffusion equation on $\Omega = (0,1) \times (0,1)$
\begin{align}
-\div (\kappa(u) \grad u) = f(x,y) \inn \Omega, ~ u = 0 ~\on~ \pa \Omega,
\end{align}
with 
\begin{align}
\kappa(u) &= \left( \begin{array}{cc}\kappa_{11}(u)& 0 \\
                                    0             & \kappa_{22}(u) \end{array}\right),
\nonumber \\
\kappa_{jj}(u) & = k + \tanh((1/\eps_j)(u-a)^2\,\sign(u-a)), ~j = 1,2,
\end{align}
with the parameters $a = 0.5$, $\eps_1 = 4\times 10^{-4}$, $\eps_2 = 4 \times 10^{-2}$,
and $k = 2$.
The discontinuity in $\kappa'$ at $u = 1/2$ separates this problem from the 
classes of Remark~\ref{remark:known_classes}
where asymptotic convergence is known, assuming a fine enough mesh.
The source function $f$ is given by
\begin{align}
f(x,y) & = 2(1-x)(1-y)(e^{6x^2}-1)(e^{6y^2}-1).
\end{align}
The initial regularization parameter $\gamma_{10}$ is set as $\GM = 5$ and the 
regularization function $\phi(w,v) = (1+\kappa'(u_k^0) \grad w, \grad v)$.
\end{example}
\begin{figure}
\centering
\includegraphics[trim=120pt 80pt 100pt 110pt,
clip=true,width=0.30\textwidth]{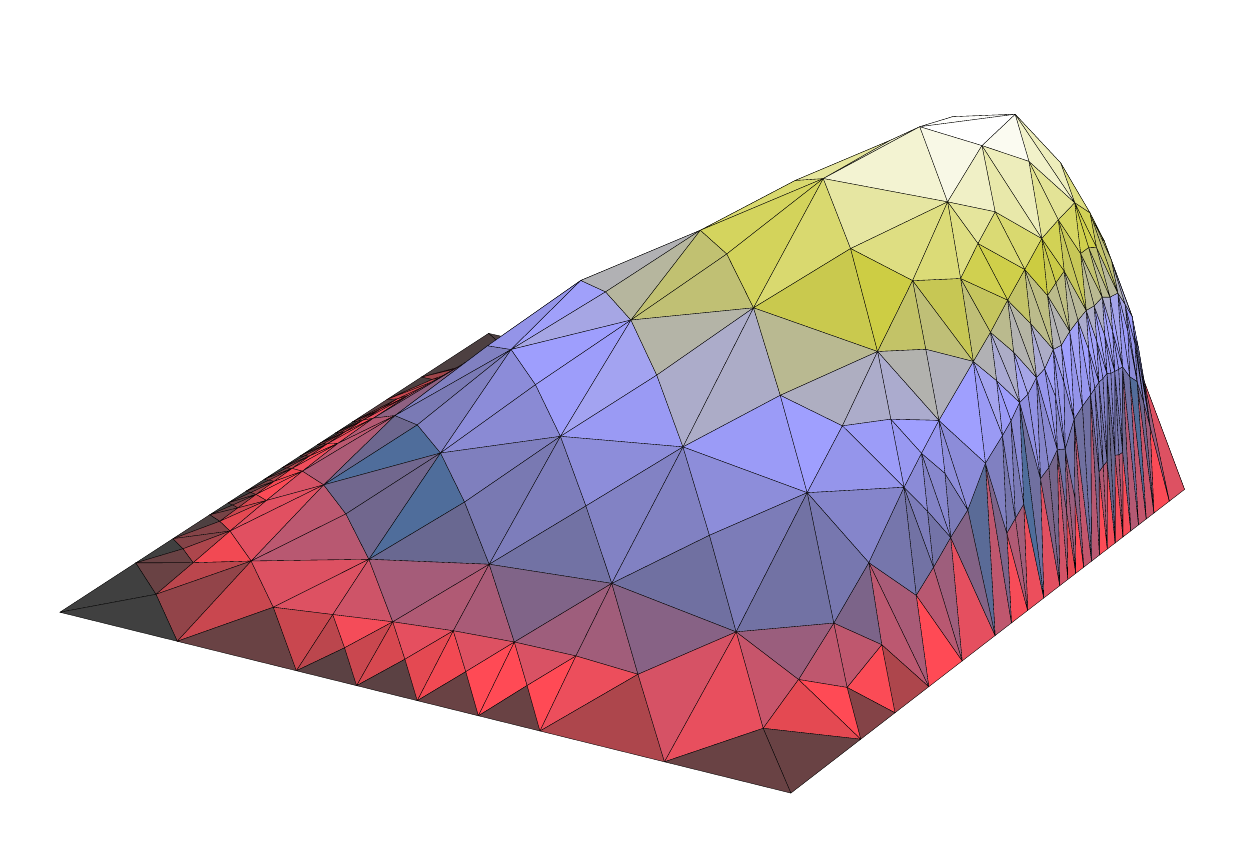}~\hfil~
\includegraphics[trim=120pt 80pt 100pt 110pt,
clip=true,width=0.30\textwidth]{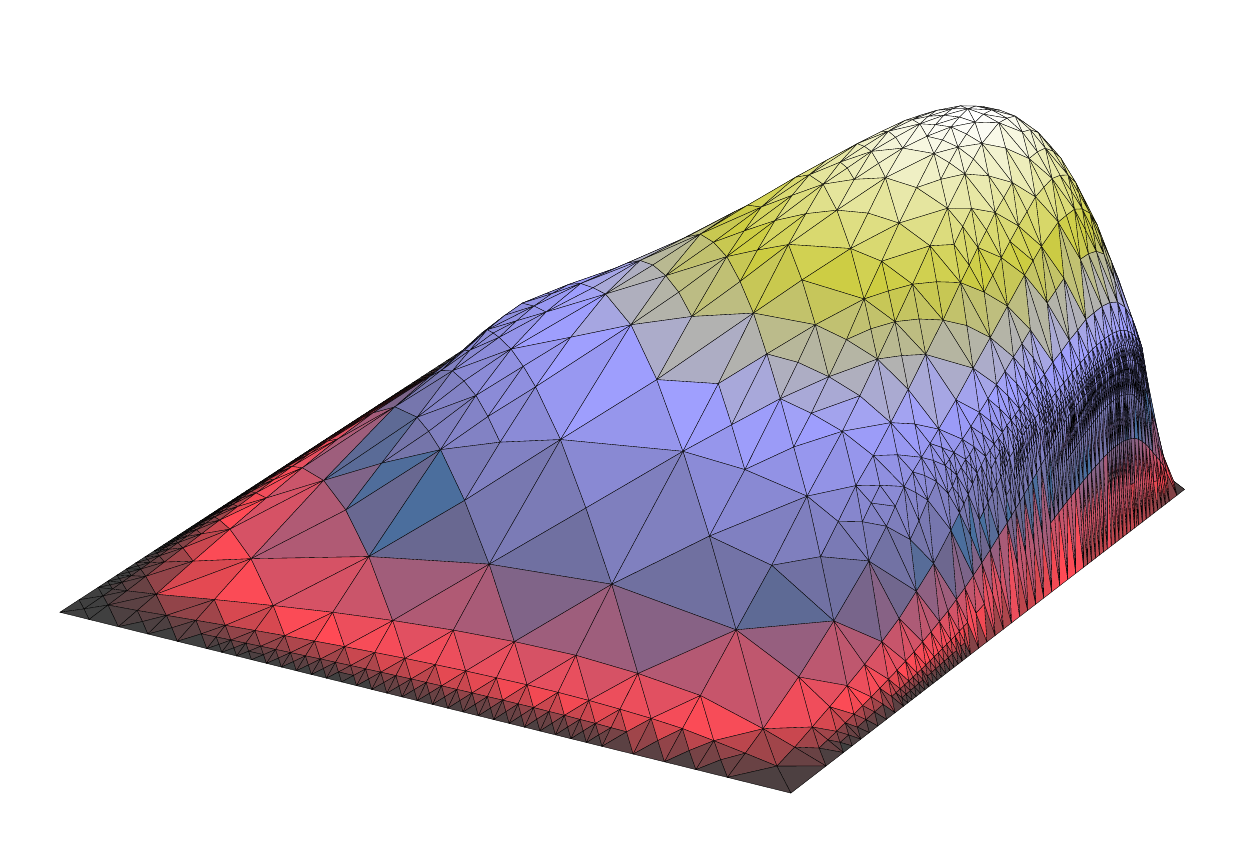}~\hfil~
\includegraphics[trim=120pt 80pt 100pt 110pt,
clip=true,width=0.30\textwidth]{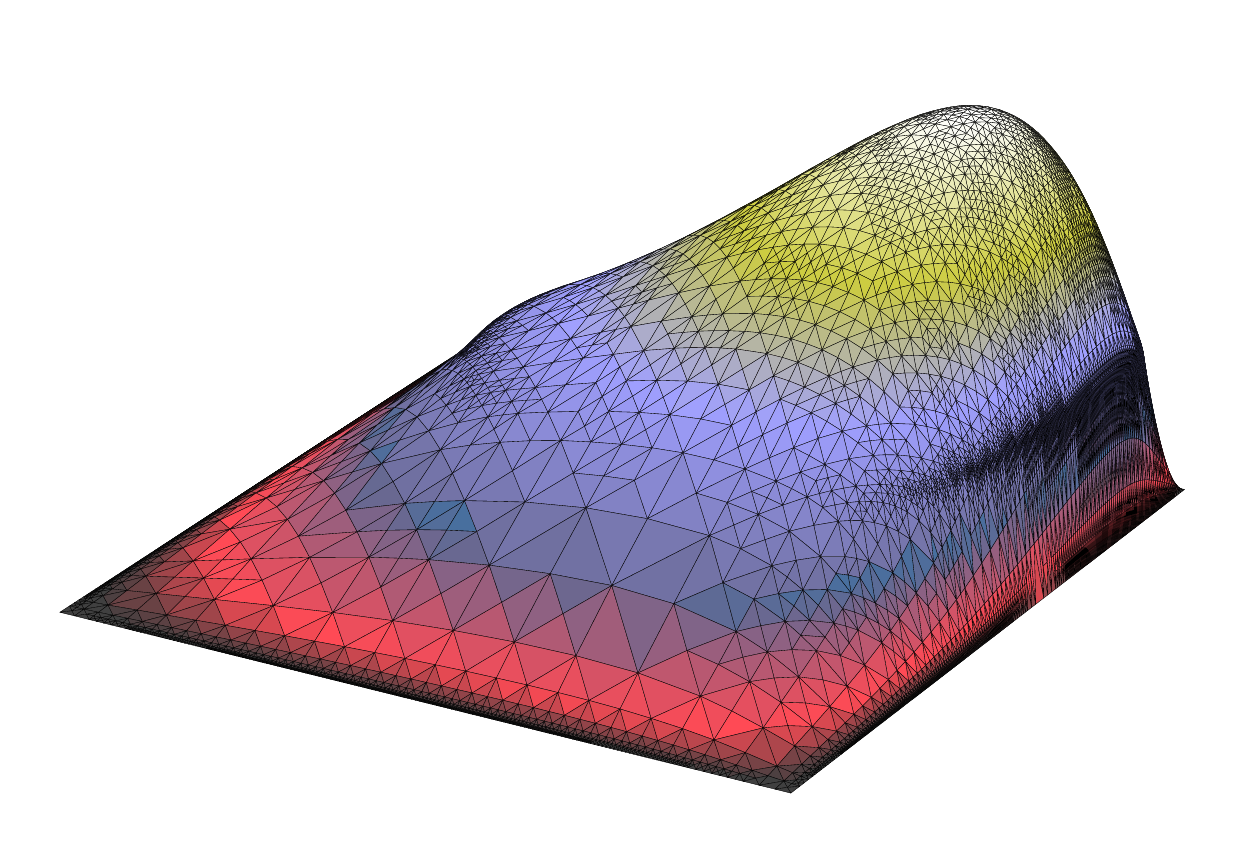}~\hfil~
\caption{Terminal solution iterates from Example~\ref{example1}.
Left: solution iterate with $\gamma_{10}= 5$ on level 10 with 237 dof.  
Center: solution iterate with $\gamma_{10}=3$ on level 20 with 1332 dof.
Right: solution iterate with $\gamma_{10} = 1$ on level 30 with 9613 dof. 
}
\label{fig:ex1sol}
\end{figure}
\begin{figure}
\centering
\includegraphics[trim=40pt 40pt 40pt 40pt,
clip=true,width=0.32\textwidth]{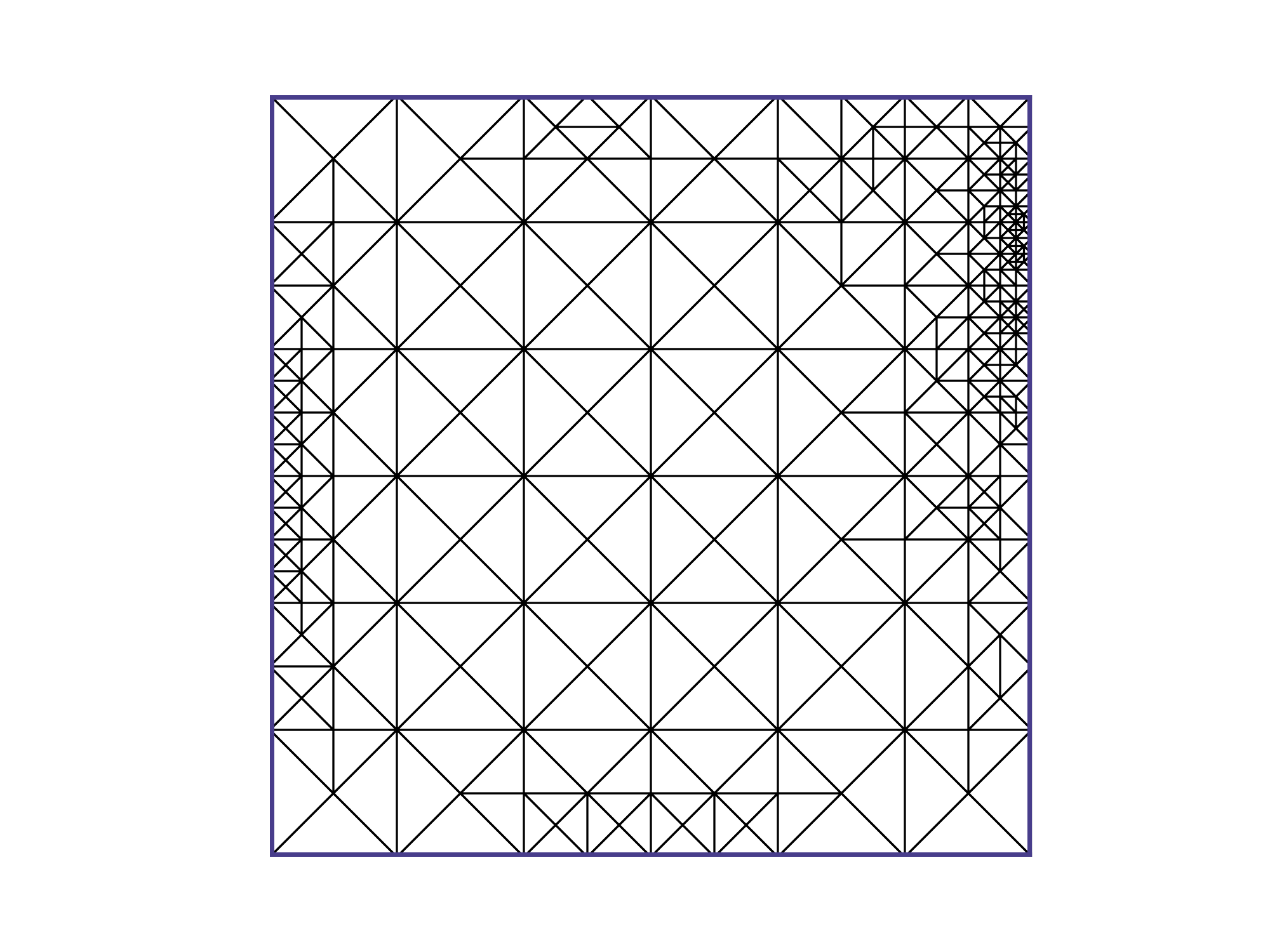}~\hfil~
\includegraphics[trim=40pt 40pt 40pt 40pt,
clip=true,width=0.32\textwidth]{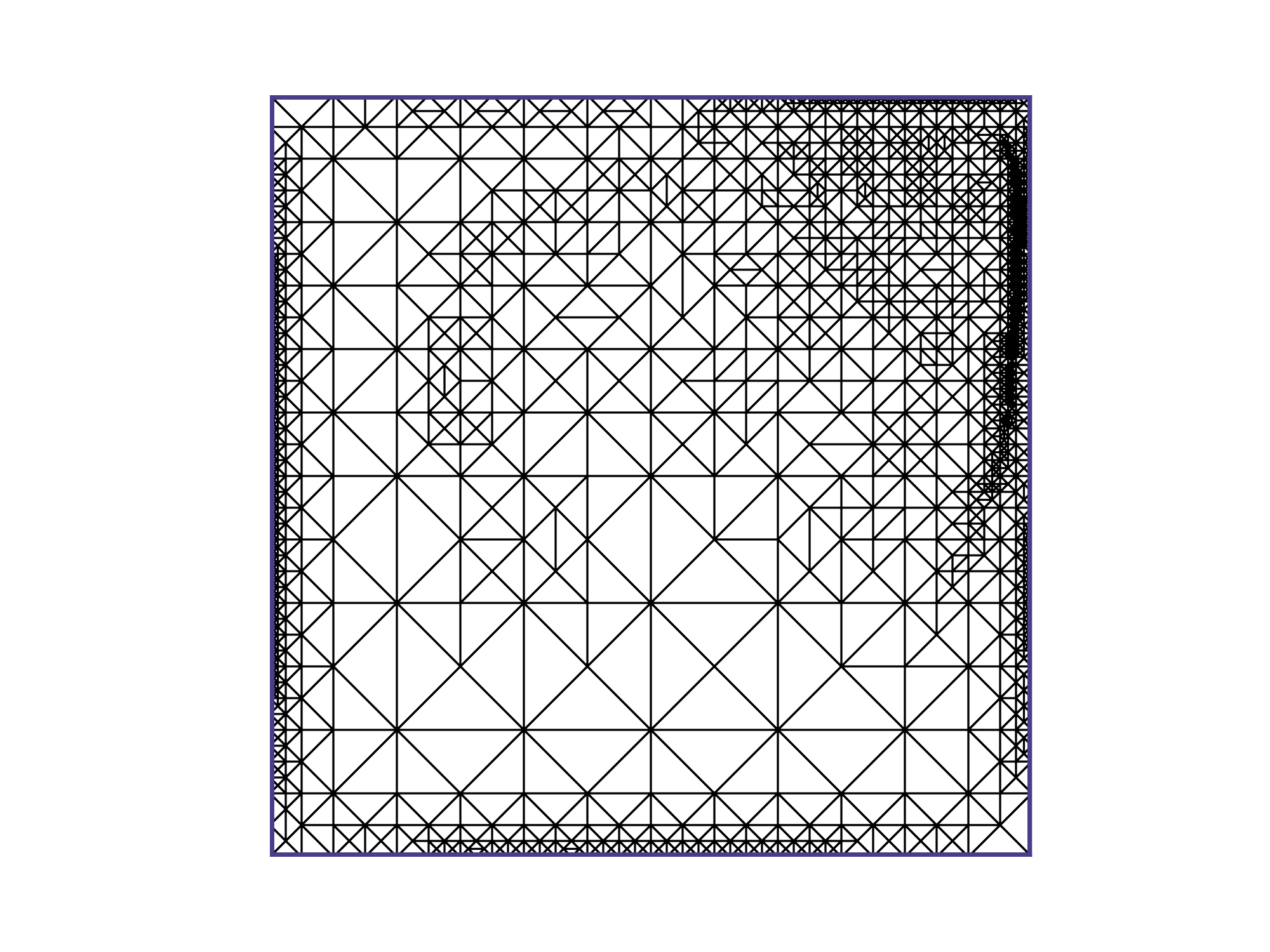}~\hfil~
\includegraphics[trim=40pt 40pt 40pt 40pt,
clip=true,width=0.32\textwidth]{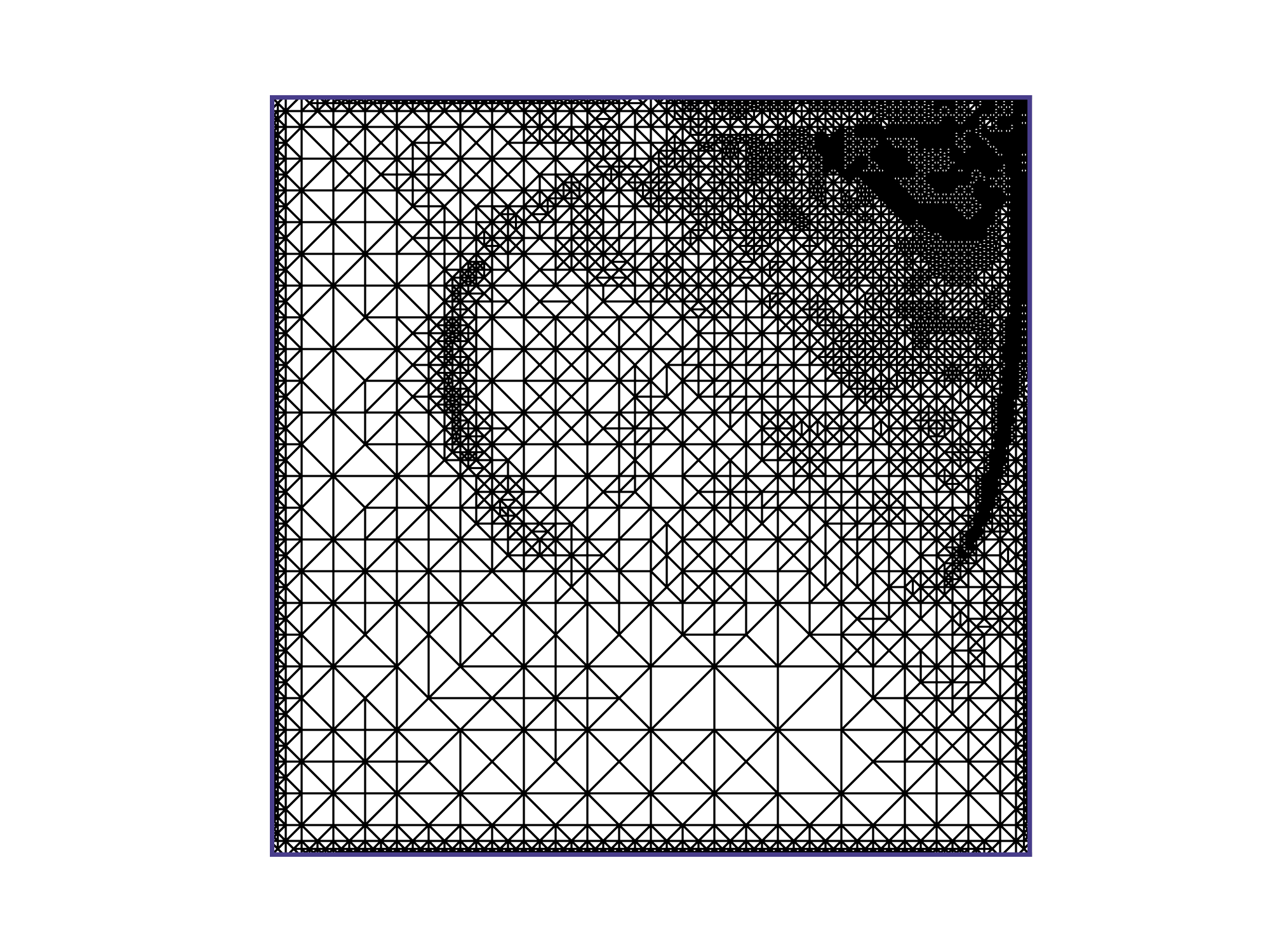}~\hfil~
\caption{Adpative meshes from Example~\ref{example1}.
Left: mesh on adaptive level 10 with 237 dof.  
Center: mesh on adaptive level 20 with 1332 dof.
Right: mesh on adaptive level 30 with 9613 dof. 
}
\label{fig:ex1mesh}
\end{figure}
\begin{figure}
\centering
\includegraphics[trim=0pt 0pt 0pt 0pt, clip=true,width=0.45\textwidth]
{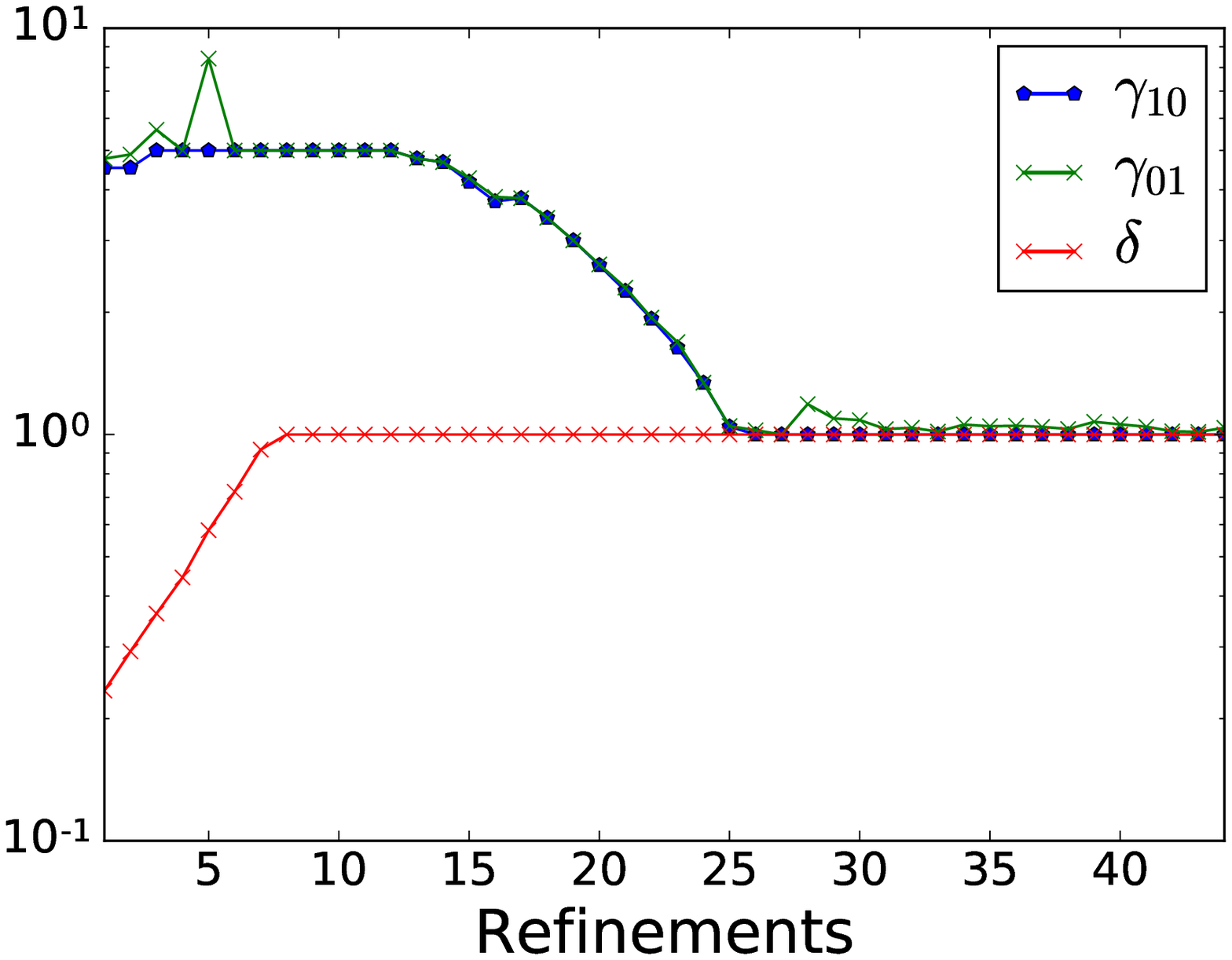}
\includegraphics[trim=0pt 0pt 0pt 0pt, clip=true,width=0.45\textwidth ]
{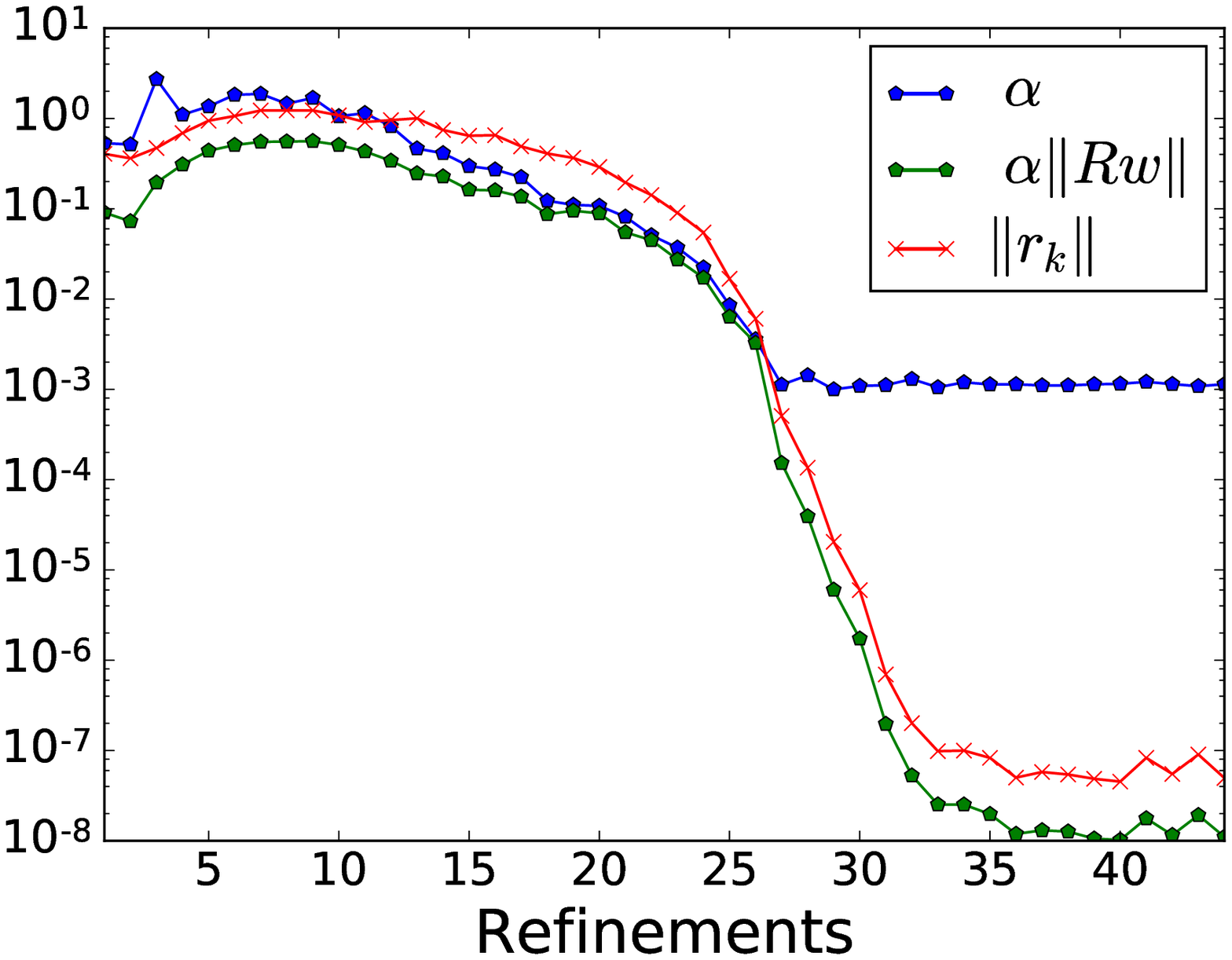}~
\caption{
Left: regularization parameters $\gamma_{10}$, $\gamma_{01}$  and $\delta$.
Right: regularization parameter $\alpha$, norm of regularization $\alpha\nr{R\w}$, 
       and the norm of the terminal residual
       $\nr{r_k},$ for Example~\ref{example1}, with nonlinear iterations running 
       to tolerance $\tol = 10^{-7}$.}
\label{fig:ex1Rparam}
\end{figure}
Figure~\ref{fig:ex1sol} shows snapshots of the computed iterates on refinements 
$10, 20$ and $30$, with respectively $237, 1332$ and $9313,$ degrees of freedom (dof),
illustrating the progress from the preasymptotic
into the asymptotic regimes. Figure~\ref{fig:ex1mesh} shows the corresponding 
adaptive meshes.  The solution plots and meshes illustrate how the mesh is refined
for both the boundary layer on either side of the origin; and, for the steep gradients
in the diffusion coefficient.  In this anisotropic case, the gradients are 
orders of magnitude steeper in the $x$- direction than the $y$- direction.  
In particular, $\kappa_{11}'$ has a Lipschitz constant of approximately $2500$, while
$\kappa_{22}'$ has a Lipschitz constant of approximately $25$.  It is observed
that the meshes refine more in the vicinity of the steeper gradients;
and, the mesh partition remains relatively coarse over large areas of the domain.

Figure~\ref{fig:ex1Rparam} shows the terminal value of the regularization parameters
$\gamma_{10}$, $\gamma_{01}$, $\delta$ and $\alpha$, on each level.  
For this example the plot on the left shows the scaling parameter 
$\delta \goto 1$, rapidly, indicating accuracy of the Jacobian terms on the coarse mesh. 
The numerical dissipation parameter $\gamma_{10}$ hovers at its maximum value for 
the first sequence of updates, then decays steadily to one, as the updates
converge to within tolerance given in \eqref{eqn:cor_decr_002} of 
Corollary \ref{cor:fast_gamma_decr}. 
Decrease of $\gamma_{10}$ as in Lemma \ref{lemma:gen_gmon} is not relevant as  
for $\GM = 5$ and $q = 0.865$ in this example, $\gmo$ given by \eqref{eqn:gmon_def}
yields $\gmo < 1$.  The Picard-like 
regularization $\gamma_{01}$, shows a few spikes above it's baseline level close
to $\gamma_{10}$, and indeed remains active into the asymptotic regime, showing that
the additional numerical diffusion maintains some cancellation properties against the
linearization error.  It was also observed numerically if this parameter were suppressed,
that is $\sigma_{01} = 0$ meaning $\gamma_{01} = \gamma_{10}$, the iterates tended to
diverge after level 30.

The plot on the right side of Figure~\ref{fig:ex1Rparam} shows the terminal value
of the Tikhonov-like parameter $\alpha$ together with the norm of the scaled
regularization term $\alpha\nr{R\w^n}$, and terminal residual $\nr{r_k}$
on each refinement level $k$.  Here it is seen that due to the scaling of $\alpha$ by
$\gamma_{10}/\nr{R\w^n}$, the parameter $\alpha$ does not go to zero, however
the definition \eqref{eqn:update_alpha} keeps the level of contributed regularization 
below the norm of the residual. Indeed, the plot of $\alpha$ diverges from
the plot of $\alpha \nr{R\w^n}$ as $\gamma\goto 1$ and $\nr{R\w^n}$ decreases
into the asymptotic regime.
It is remarked however, that the initial $\alpha_k^0 = \nr{r_k^0}$ on each refinement, 
then $\alpha$ decreases with the residual over each iteration. 
Without maintaining this low level of regularization into the asymptotic regime, 
the iterations were again observed to diverge.

The second example illustrates the updated regularization parameters on the model
problem shown in previous work by the author \cite{Pollock14a,Pollock15a,Pollock15b}.
In contrast to Example \ref{example1}, here the parameters $\gamma_{10}$ and $\delta$
play a dominant role in the regularization, while $\sigma_{01}$ and $\alpha$ are 
less significant into the asymptotic regime.  

\begin{example}[Diffusion with thin layers]\label{example2} Consider the 
quasilinear diffusion equation on $\Omega = (0,1) \times (0,1)$
\begin{align}
-\div (\kappa(u) \grad u) = f(x,y) \inn \Omega, ~ u = 0 ~\on~ \pa \Omega,
\end{align}
with 
\begin{align}
\kappa(u) &=  k + \f {1}{\eps + (u-a)^2},  
\end{align}
with the parameters $a = 1/2$, $\eps = 10^{-5}$,
and $k = 1$. 
The source function $f$ is chosen so the exact solution 
$u(x,y) = \sin(\pi x)\sin(\pi y)$. This problem, featuring a bounded second derivative
and high regularity of the solution, fits into both problem classes mentioned in
Remark~\ref{remark:known_classes}.

The initial regularization parameter $\gamma_{10}$ is set as
$\GM = (\sqrt 3/2)\eps^{-1/2}$, the approximate ratio of $\kappa'(\bar s)/\kappa(\bar s)$,
where $\bar s = \argmax (\kappa'(s)) $.
The regularization function $\phi(w,v) = (\grad w, \grad v)$, the standard
Laplacian preconditioner.
\end{example}
\begin{figure}
\centering
\includegraphics[trim=120pt 80pt 100pt 110pt,
clip=true,width=0.30\textwidth]{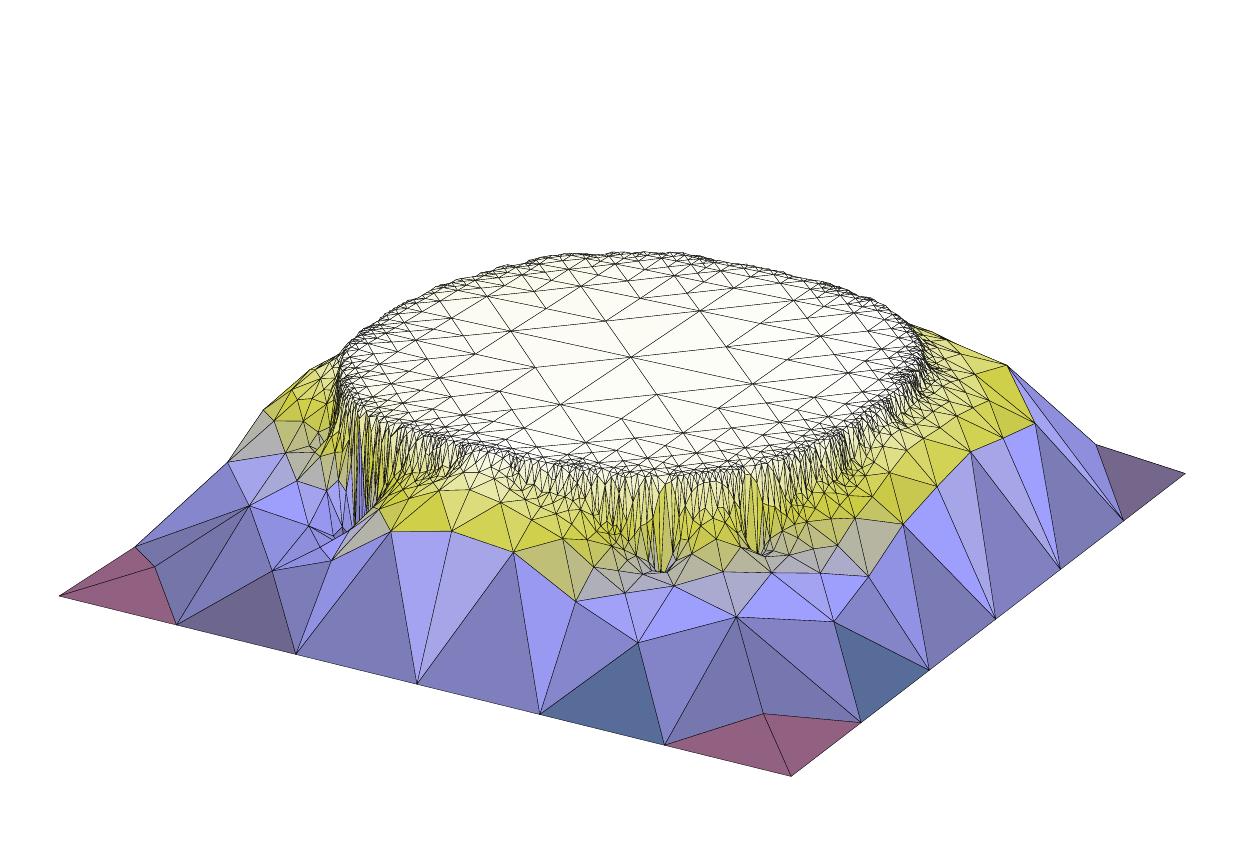}~\hfil~
\includegraphics[trim=120pt 80pt 100pt 110pt,
clip=true,width=0.30\textwidth]{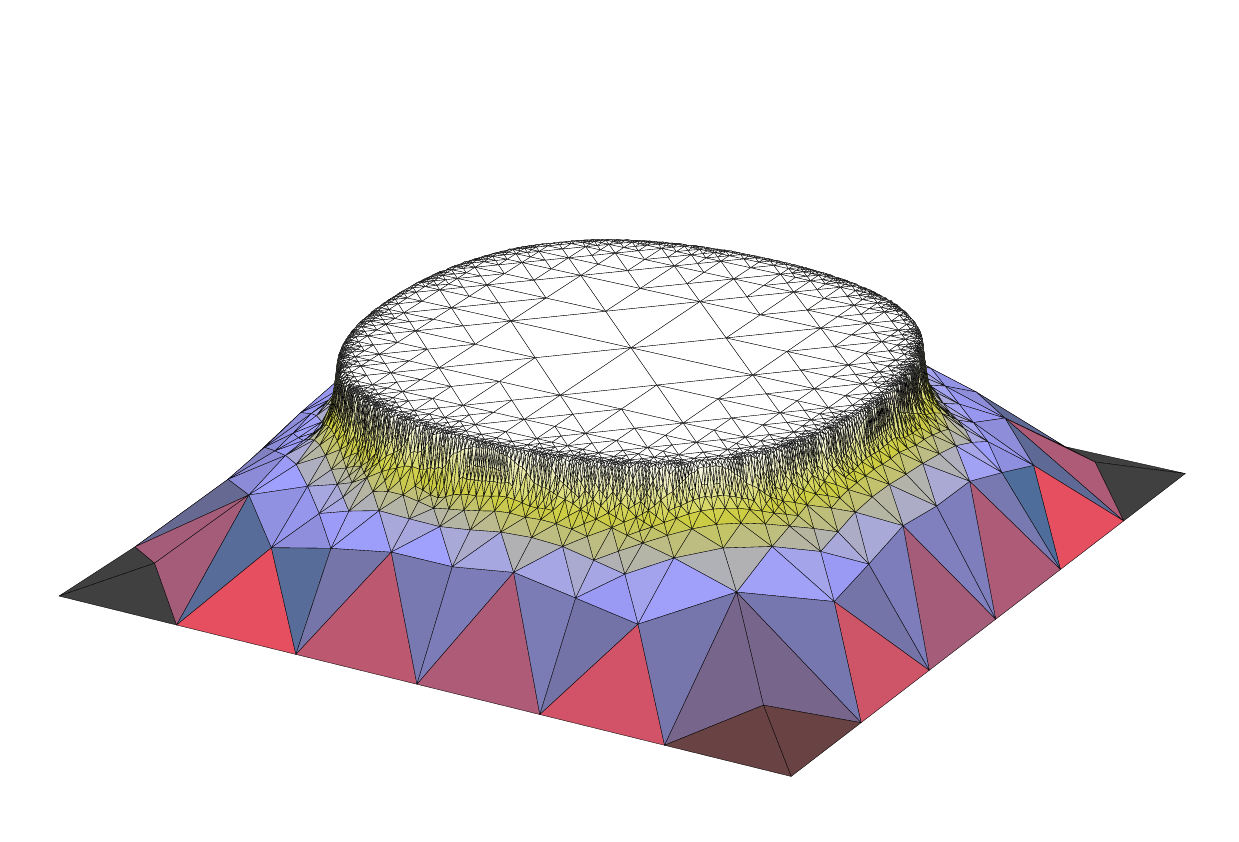}~\hfil~
\includegraphics[trim=120pt 80pt 100pt 110pt,
clip=true,width=0.30\textwidth]{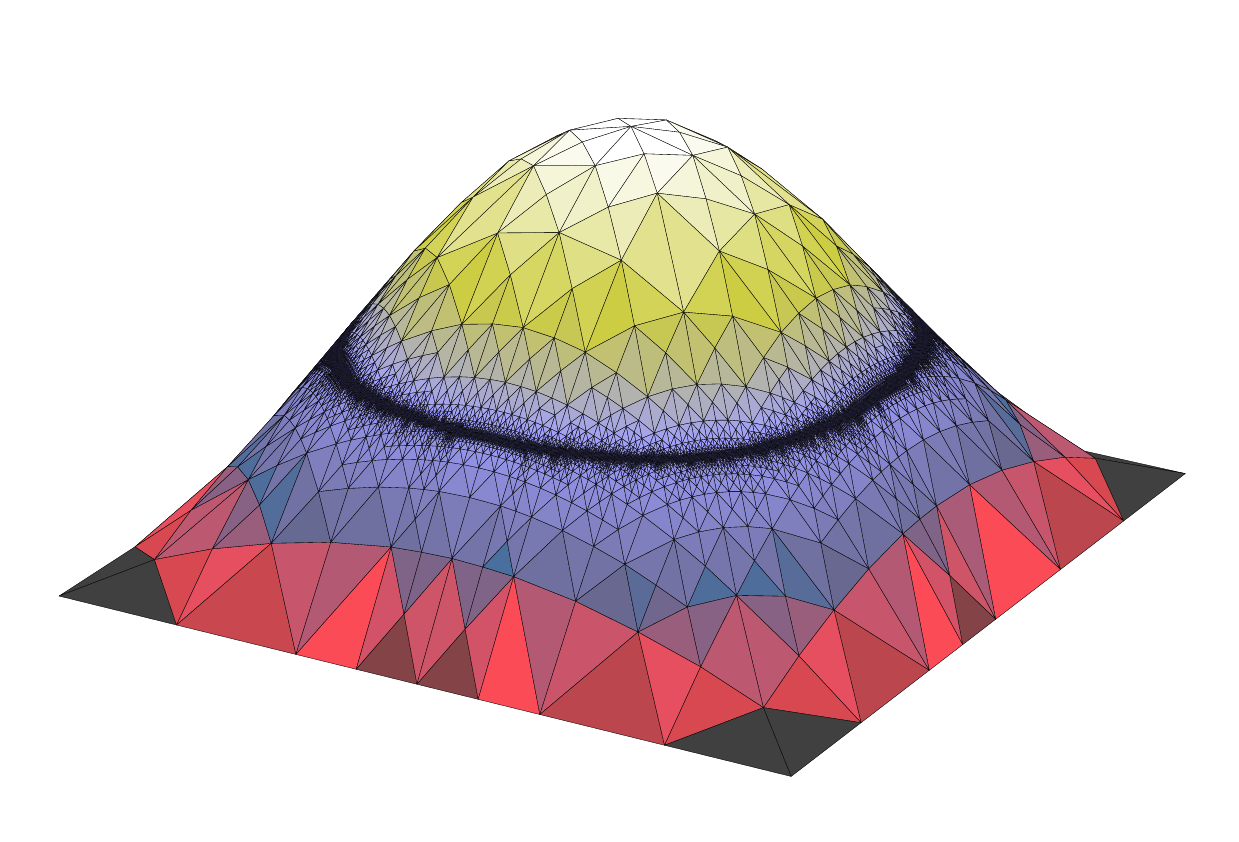}~\hfil~
\caption{Terminal solution iterates from Example~\ref{example2}.
Left: solution iterate with $\gamma_{10}= 11$ on level 25 with 1511 dof.  
Center: solution iterate with $\gamma_{10}=5$ on level 30 with 3062 dof.
Right: solution iterate with $\gamma_{10} = 1$ on level 40 with 21678 dof. 
}
\label{fig:ex2sol}
\end{figure}
\begin{figure}
\centering
\includegraphics[trim=40pt 40pt 40pt 40pt,
clip=true,width=0.32\textwidth]{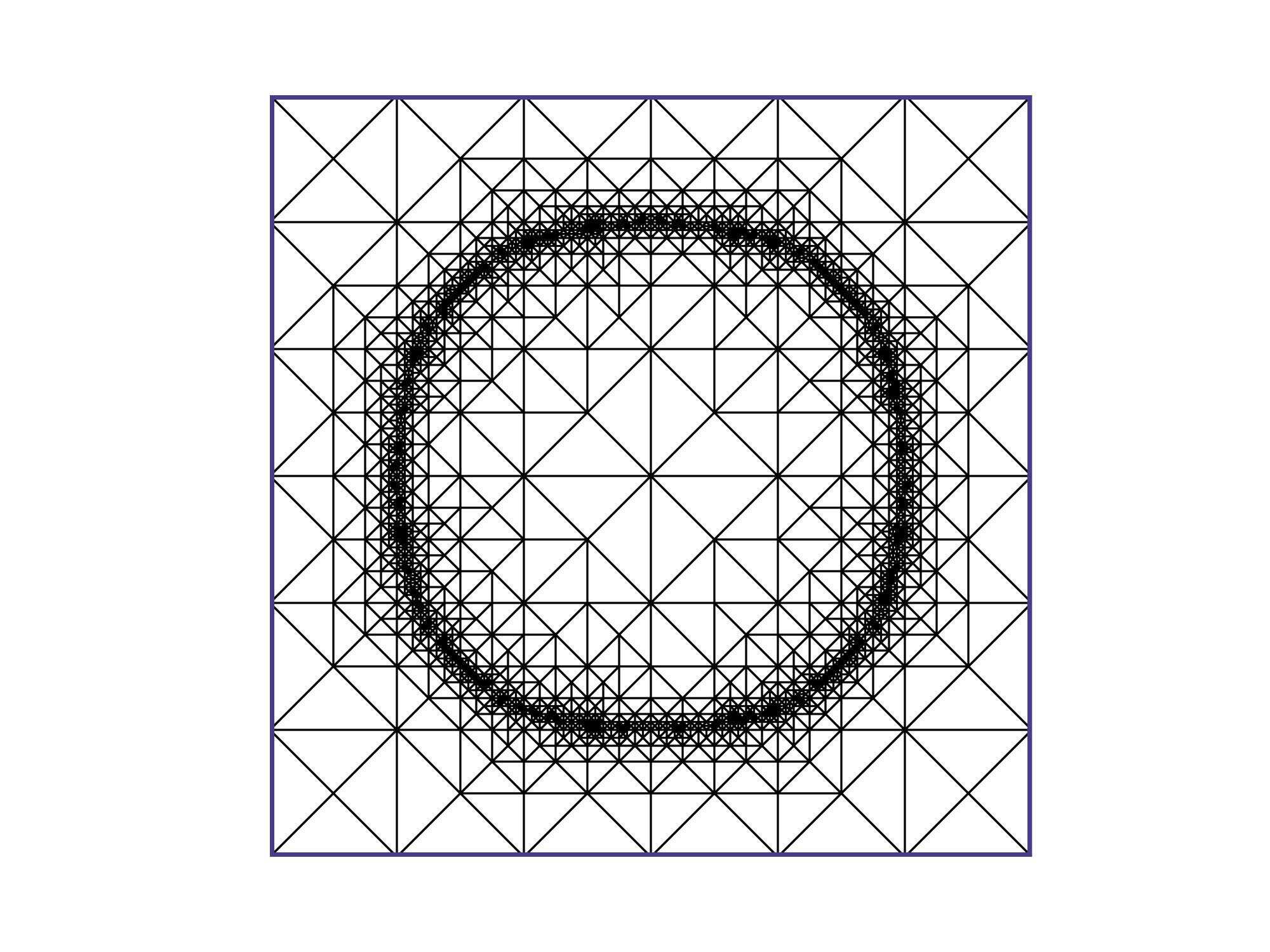}~\hfil~
\includegraphics[trim=40pt 40pt 40pt 40pt,
clip=true,width=0.32\textwidth]{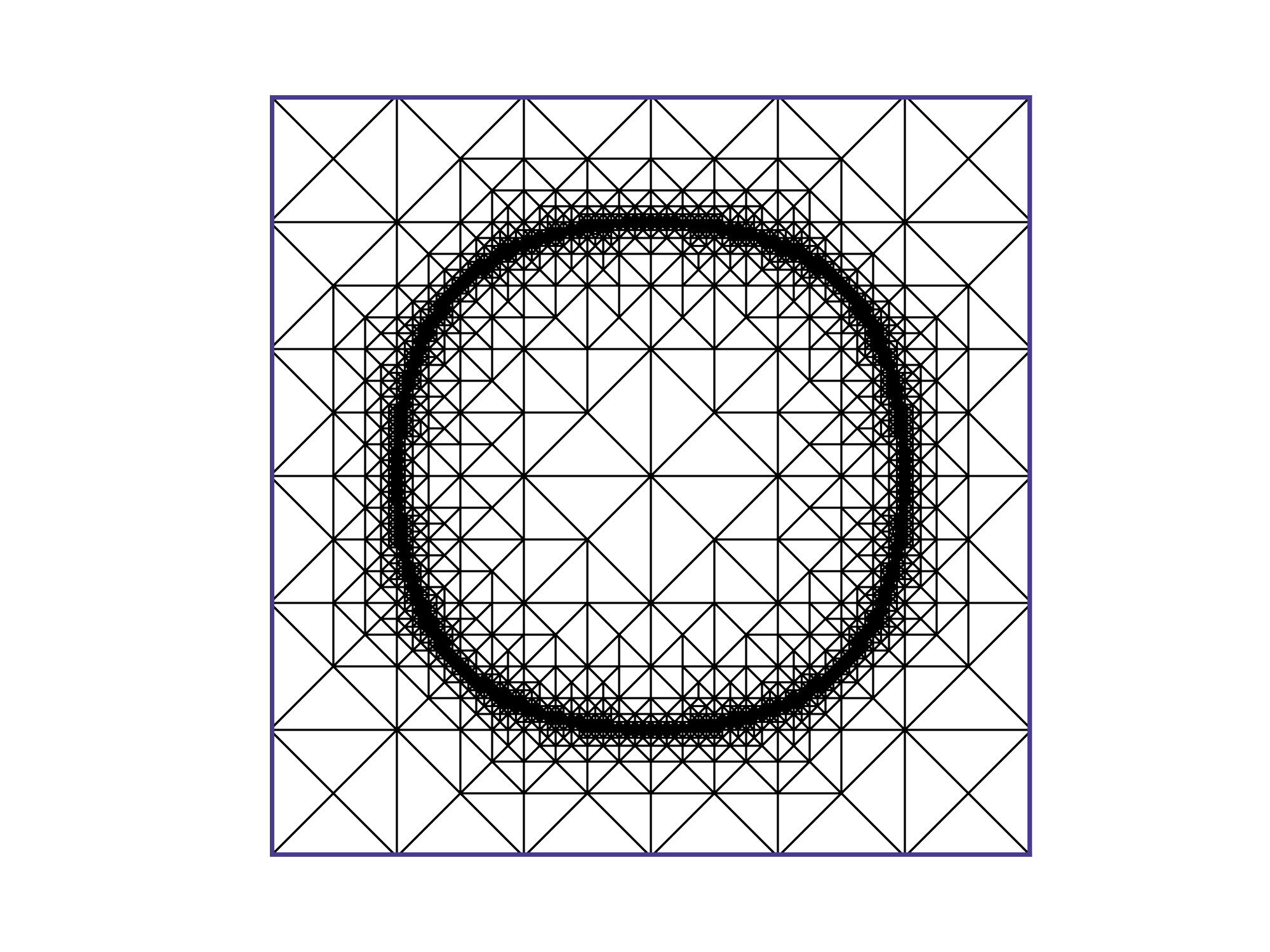}~\hfil~
\includegraphics[trim=40pt 40pt 40pt 40pt,
clip=true,width=0.32\textwidth]{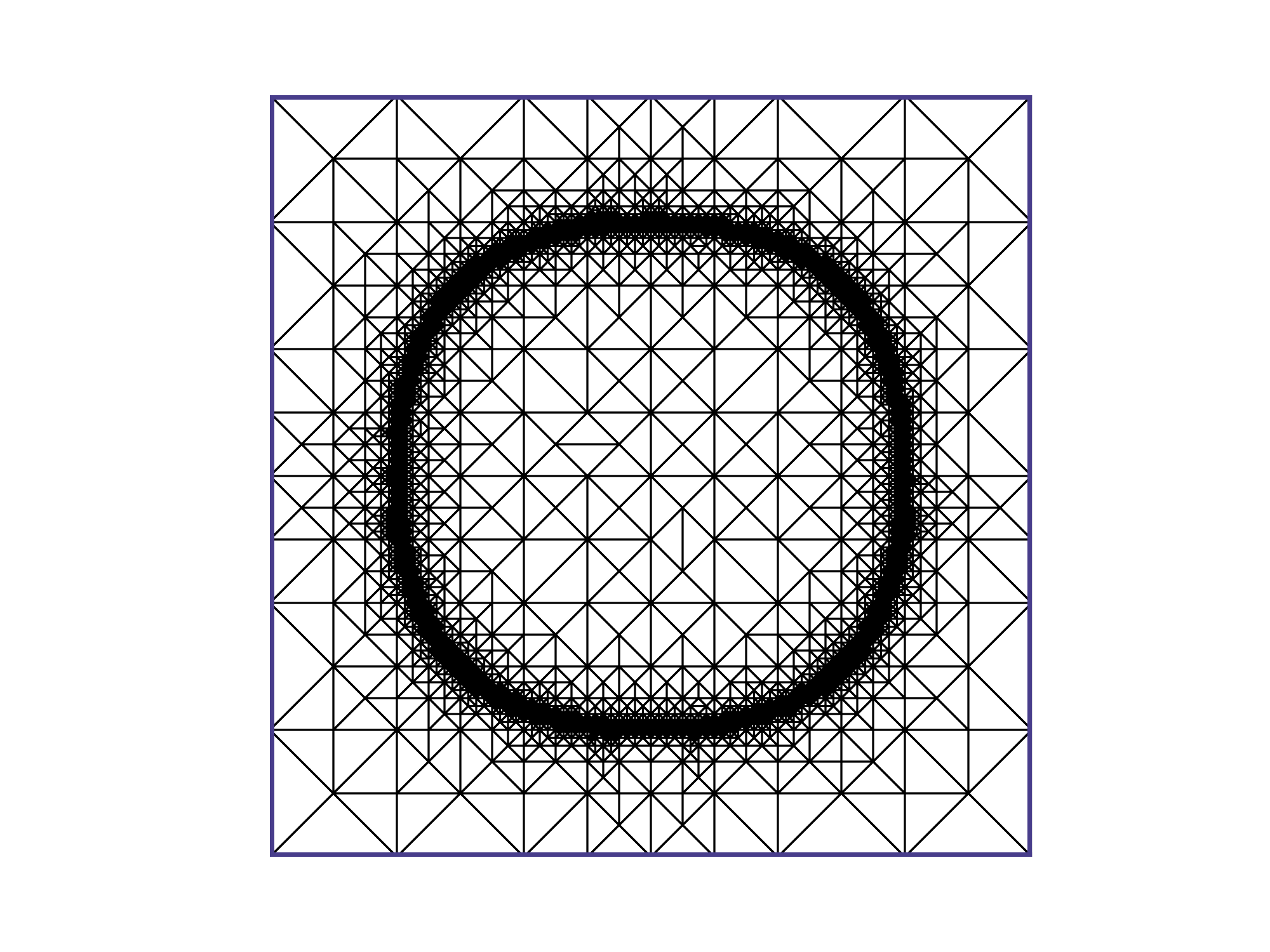}~\hfil~
\caption{Adpative meshes from Example~\ref{example2}.
Left: mesh on adaptive level 25 with 1511 dof.  
Center: mesh on adaptive level 30 with 3062 dof.
Right: mesh on adaptive level 40 with 21768 dof. 
}
\label{fig:ex2mesh}
\end{figure}

Figure \ref{fig:ex2sol} shows snapshots of the solution progression through the 
preasymptotic and into the asymptotic regime.  The snapshot on the left, from
level 25 with 1511 dof, and the snapshot in the center, from level 30, with 3062 dof,
show the effect of $\delta \ll 1$: the source function is scaled down so the solution 
iterates are held beneath the strong diffusion layer at $u = 1/2$, until the 
diffusion in the vicinity of the ultimately thin layer is sufficiently resolved. 
Then, as $\delta \goto 1$,
the full strength source pushes the solution iterates through the diffusion layer,
resulting in the asymptotic iterate on the right of Figure \ref{fig:ex2sol}, on 
level 40 with 21678 dof. The corresponding meshes in Figure \ref{fig:ex2mesh} 
illustrate the mesh refinement focused in the vicinity of the steep gradients
of the diffusion layer.

\begin{figure}
\centering
\includegraphics[trim=5pt 0pt 5pt 0pt, clip=true,width=0.32\textwidth]
{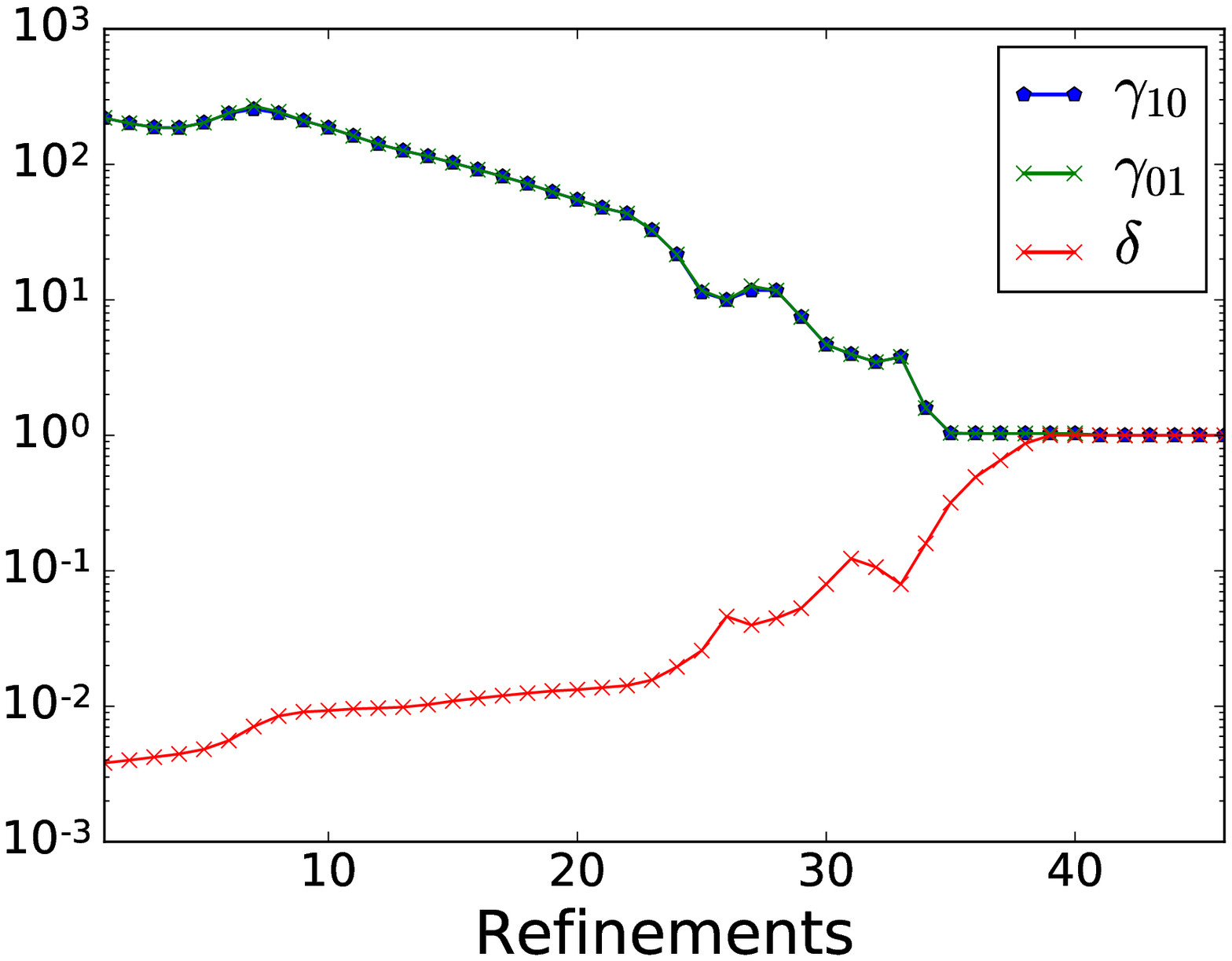}
\includegraphics[trim=5pt 0pt 5pt 0pt, clip=true,width=0.32\textwidth ]
{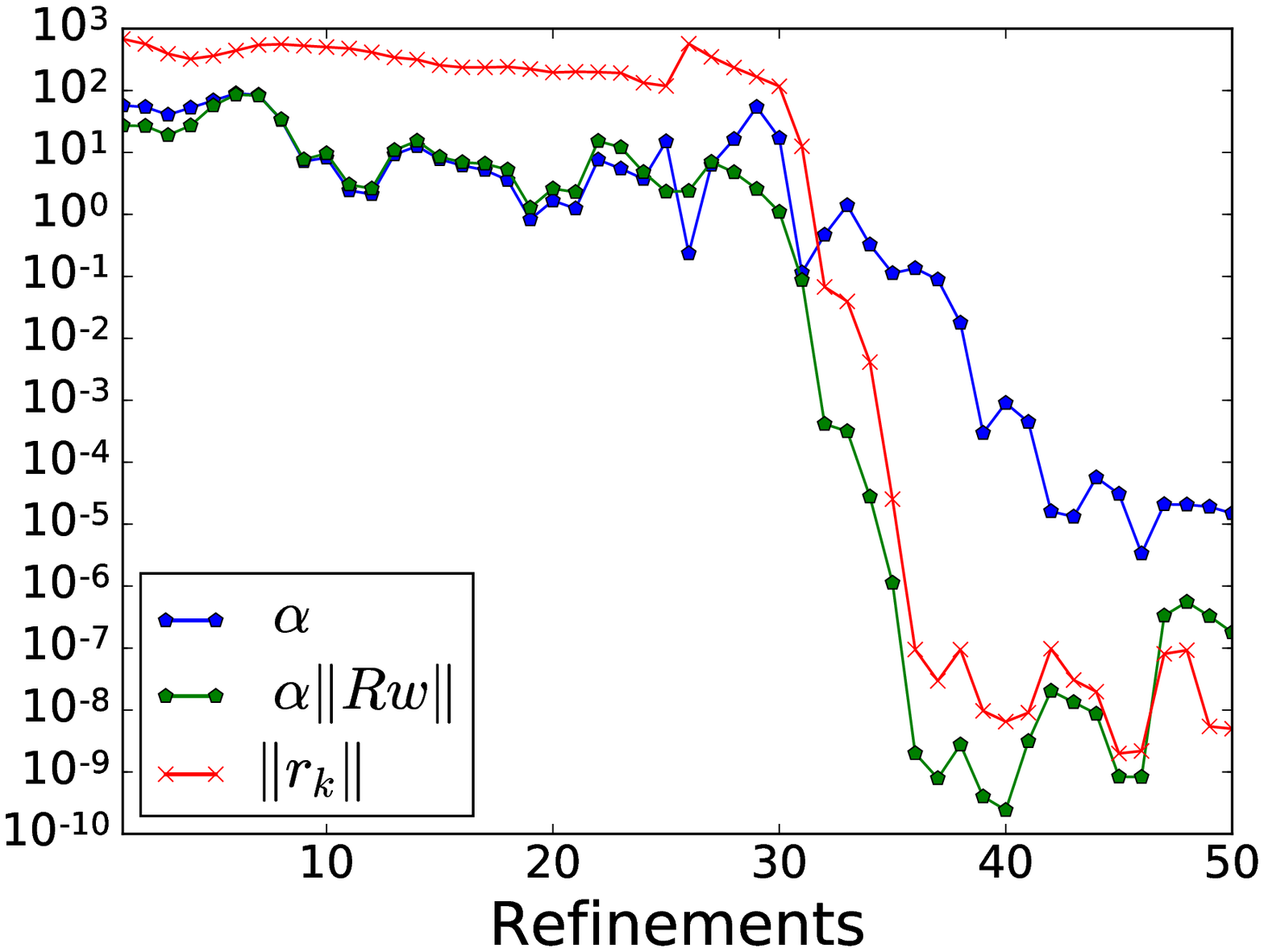}~
\includegraphics[trim=5pt 0pt 5pt 0pt, clip=true,width=0.32\textwidth ]
{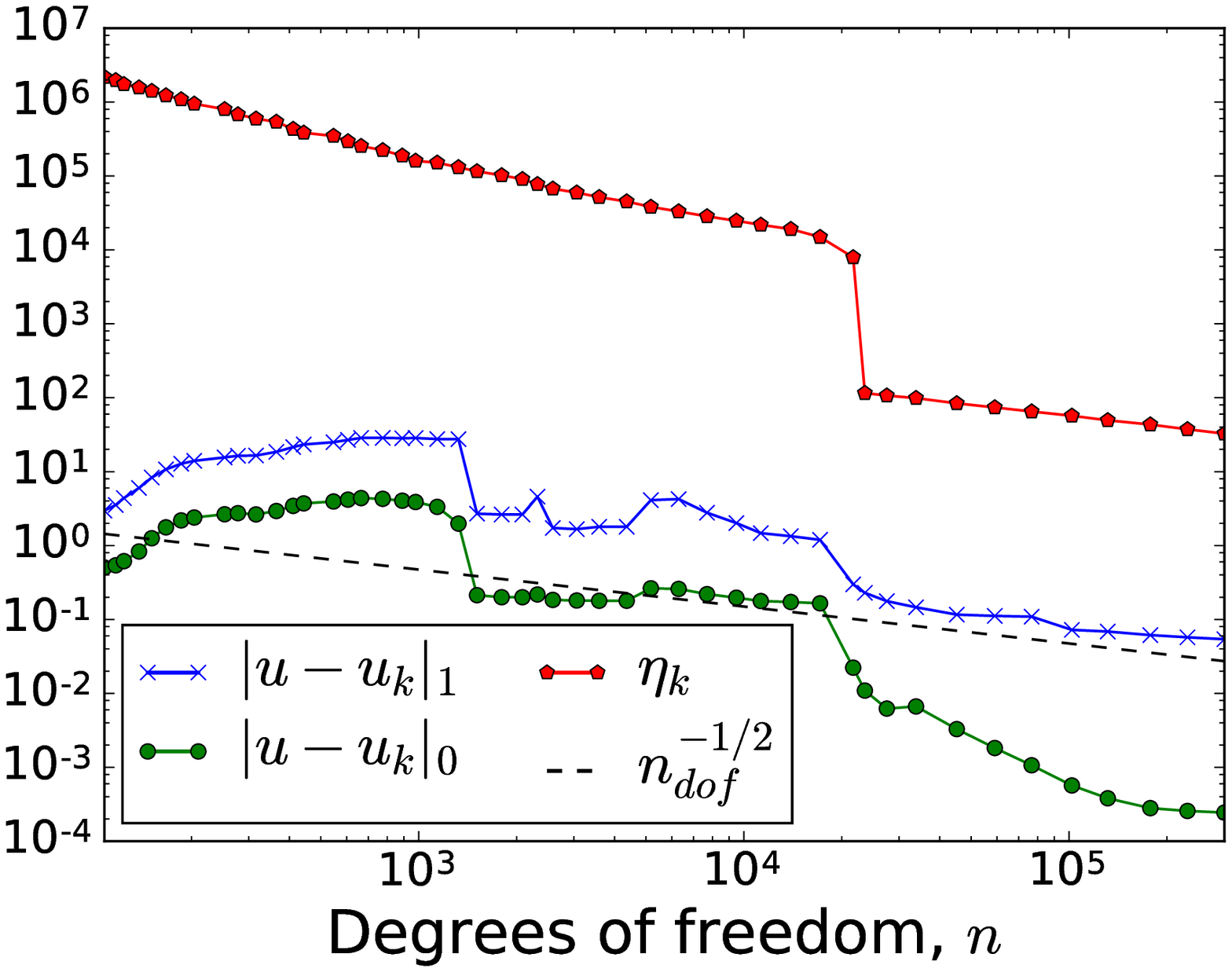}~
\caption{Left: regularization parameters $\gamma_{10}$, $\gamma_{01}$  and $\delta$.
       Center: regularization parameter $\alpha$, the norm of the regularization
               $\alpha\nr{R\w}$, and the norm of the terminal residual
               $\nr{r_k}$.
        Right: $H^1$ error, $|u - u_k|_1$; $L_2$ error, $|u-u_k|_0$; and,
               error estimator $\eta_k$, against $n_{dof}^{-1/2}$, degrees
               of freedom, for Example~\ref{example2}, with nonlinear iterations running 
               to tolerance $\tol = 10^{-7}$.}
\label{fig:ex2Rparam}
\end{figure}

The first two plots of Figure \ref{fig:ex2Rparam} shows the terminal values of each 
regularization parameter
$\gamma_{10},\gamma_{01},$ $\delta$ and $\alpha$ on each refinement level.
On the left, it is seen as $\gamma_{10}$ progresses from $\GM$ of approximately
250 down to 1, the initial relationship $\delta = 1/\gamma_{10}$ is roughly 
maintained.  From this plot it is also apparent that the regularization parameter
$\sigma_{01}$ plays only a minor role in the stabilization of the Jacobian, and
$\gamma_{01} \approx \gamma_{10}$ throughout the simulation.  

Figure~\ref{fig:ex2Rparam} in the center, shows the terminal value of $\alpha$, 
which scales the Tikhonov-like regularization term, plotted together with the
full norm of the Tikhonov-like term $\alpha\nr{R\w^n}$, and the 
final residual on each iteration $\nr{r_{k}}$.  The effect of scaling $\alpha$ 
against the norm of $R\w^n$ is seen to be small in the preasymptotic regime 
where $\nr{R\w^n} = \bigo(1)$. This scaling is however of increasing importance 
into the asymptotic phase, to reduce this
regularization to the order of the residual norm, for fast convergence.

After refinement level 28, as $\gamma_{10} < \gmo = 96$, given by 
Lemma \ref{lemma:gen_gmon}, the 
residual decrease exit criteria, \eqref{exit:pa1}-\eqref{exit:pa2} of Condition (2), 
Critia \ref{criteria:exit}, is enforced, resulting in the rapid decrease 
of the residual over the next several refinements, seen in 
Figure \ref{fig:ex2Rparam} on the right. Finally, it is noted 
in the plot on the right of Figure \ref{fig:ex2Rparam}, that upon entering
the asymptotic regime with the residual solving to tolerance at each iteration,
the $H^1$ error reduces at the rate $n_{dof}^{-1/2}$, the expected rate 
for the corresponding linear problem.

\section{Conclusion}\label{sec:conclusion}
This paper
describes a framework for pseudo-time regularization, applied to a generally
nonmonotone class of quasilinear partial differential equations.
The regularization, which is designed to exploit the quasilinear structure of
the equation, is first derived from the discrete problem at the PDE level. 
The regularized linear algebraic system is then specified under inexact assembly.
The residual representation of the assembled system then reveals the errors 
induced from regularization, linearization and floating-point arithmetic;
and, allows insight into how regularization can control the linearization error.
An updated set of regularization parameters is presented, then applied 
to an adaptive algorithm to approximate the solution of quasilinear PDE of
nonmonotone type. The method is demonstrated on two problems, the first of 
which features an anisotropic diffusion coefficient that is not twice differentiable.
The second demonstrates recovering a known solution from a model problem with 
a thin diffusion layer.
The results suggest theoretical convergence of the error without a sufficiently-fine
mesh condition or a second derivative on the solution-dependent diffusion 
coefficient should be possible. 

\section{Acknowledgments}
   \label{sec:ack}
The author would like to thank William Rundell and Yunrong Zhu for numerous
interesting discussions on the topics addressed here, and for input on a draft
of this manuscript. 

\bibliographystyle{abbrv}

\bibliography{refsTRN}



\end{document}